\newtheorem{theorem}{Theorem}
\newtheorem{adefinition}[theorem]{Definition}
\newtheorem{aexample}[theorem]{Example}
\newtheorem{lemma}[theorem]{Lemma}
\newtheorem{aremark}[theorem]{Remark}
\newenvironment{remark}{\begin{aremark}\rm}{\end{aremark}}
\numberwithin{equation}{section} \numberwithin{theorem}{section}
\DeclareMathOperator\Tr{Tr}
\newcommand{\p}{\mathbf{P}}
\newcommand{\N}{\mathbb{N}}
\newcommand{\NN}{N}
\newcommand{\M}{\mathcal{M}}
\newcommand{\E}{\mathbf{E}}
\newcommand{\V}{\mathbf{Var}}
\newcommand{\R}{\mathbb{R}}
\newcommand{\C}{\mathbb{C}}
\newcommand{\Qa}{Q_\alpha}
\newcommand{\Qb}{Q_\beta}
\newcommand{\Ak}{ A_{k\ell}}
\newcommand{\Ai}{ A_{ij}}
\newcommand{\Qk}{Q^{k\ell}}
\newcommand{\Qn}{Q_{(n)}}
\newcommand{\Gk}{ G^{k\ell}}
\newcommand{\Gi}{ G^{ij}}
\newcommand{\Yk}{ Y^{k\ell}}
\newcommand{\Yi}{ Y^{ij}}
\newcommand{\al}{\alpha}
\newcommand{\be}{\beta}
\newcommand{\A}{\mathcal{A}_{rd}}
\newcommand{\LL}{\mathcal{L}_{rd}}
\newcommand{\vv}{\mathbf{v}}
\newcommand{\uu}{\mathbf{u}}
\newcommand{\ya}{\mathbf{y}_\alpha}
\newcommand{\xa}{\mathbf{x}_\alpha}
\newcommand{\xb}{\mathbf{x}_\beta}
\newcommand{\yb}{\mathbf{y}_\beta}
\newcommand{\fa}{f_{n, \al}}
\newcommand{\am}{ \al}
\newcommand{\bm}{ \beta}
\newcommand{\ga}{ \gamma}
\newcommand{\ws}{\widetilde{\sigma}}
\newcommand{\D}{\mathbf{\delta}}
\begin{document}

\title{On the empirical spectral distribution
for certain models
\\
related to sample covariance matrices with different correlations}
\author{Alicja Dembczak-Kołodziejczyk, Anna Lytova
}
\newcommand\address{\noindent\leavevmode

\noindent
Alicja Dembczak-Kołodziejczyk,\\
University of Opole,\\
48 Oleska, 45-052,\\
Opole, Poland.\\
\texttt{\small
e-mail:
alicja.dembczak@uni.opole.pl
}
\medskip

\noindent
Anna Lytova,\\
University of Opole,\\
48 Oleska, 45-052,\\
Opole, Poland.\\
\texttt{\small
e-mail:
alytova@uni.opole.pl
}
}
\date{}
\maketitle

\begin{abstract}
Given $n,m\in \N$, we study two classes of large random matrices of the form
$$
\mathcal{L}_n   =\sum_{\al=1}^m\xi_\alpha \ya  \ya^T\quad\text{and}\quad \mathcal{A}_n   =\sum_{\al=1}^m\xi_\alpha (\ya  \xa^T+\xa  \ya^T),
$$
where for every $n$, $(\xi_\al)_\al\subset \mathbb{R}$ are iid  random variables independent of $(\xa,\ya)_\al$, and $(\xa)_\al$, $(\ya)_\al\subset \R^n$ are two (not necessarily independent) sets of independent random vectors having {\it different covariance matrices} and generating well concentrated bilinear forms. We consider two main asymptotic regimes as $n,m(n)\to \infty$:  a standard one, where $m/n\to c$, and a slightly modified one, where $m/n\to\infty$ and $\E\xi\to 0$  while $m\E\xi /n\to c$ for some $c\ge 0$. Assuming that vectors  $(\xa)_\al$ and $(\ya)_\al$ are normalized and isotropic ``in average'', we prove the convergence in probability of the empirical spectral distributions of $\mathcal{L}_n   $ and $\mathcal{A}_n   $ to a version of the Marchenko-Pastur law and so called effective medium spectral distribution, correspondingly. In particular, choosing normalized Rademacher random variables as $(\xi_\al)_\al$, in the modified regime one can get  a shifted semicircle and semicircle laws. We also apply our results to the certain classes of  matrices having block structures, which were studied in \cite{Cic:18, Cic:19}.
\end{abstract}

\section{Introduction}
\label{s:intro}
In \cite{Cic:18, Cic:19}, the authors studied the empirical spectral distributions of the following two related models of sparse block matrices.
Given $r,d\in \N$, let $(\vv^{kl})_{1\le k<l\le r}$ be 
independent copies of a random vector $\vv$ uniformly distributed on the unit sphere in $\R^{d}$, and let  $(\xi_{kl})_{1\le k<l\le r}$ be independent copies of a $0/1$ random variable $\xi=\xi_r$ such that $\E \xi=p_r$ for some $p_r\in(0,1]$. Define $\A$ and  $\LL$ as $rd\times rd$ block matrices of the form
\begin{equation}\label{AL}
  \A=\Big((1-\delta_{k\ell })B_{k\ell }\Big)_{k,\ell=1}^r,\quad \LL=\Big(\delta_{k\ell}\sum _{j\neq k }B_{j\ell}\Big)_{k,\ell=1}^r-\A,
\end{equation}
where for $1\le k<l\le r$ blocks
\begin{equation}\label{B}
  B_{k\ell}=\xi_{k\ell}\vv^{k\ell}\vv^{k\ell\,T}
\end{equation}
are $d\times d$ rank-one matrices with probability $p_r$ (and 0 otherwise).
These models were introduced in \cite{Cic:18} while studying the elastic
vibrational modes of amorphous solids. Roughly speaking they describe a system of $r$ $d$-dimensional points connected
by springs (see also \cite{LM:06,ZS:11} and references therein for the application of these models in
the study of certain disordered systems).  Evidently, for $d=1$  matrices $\A$ and  $\LL $ reduce to the adjacency matrix and  Laplacian of the Erd\H{o}s--R\'{e}nyi graph.

Recall that given a Hermitian or symmetric $n\times n$ matrix $\M_n$ with eigenvalues $(\lambda_i)_i$, the empirical spectral distribution $\NN _{\M_n}$ of $\M_n$ and its Stieltjes transform $s_{\M_n}$ are  defined by the formulas
$$
\NN _{\M_n}(\Delta)=\vert\{i:\,\lambda_i\in\Delta\}\vert/n,\,\, \forall\Delta\subset\R,\quad\text{and}
$$
$$
s_{\M_n}(z)=\int\frac{ \NN _{\M_n}(d\lambda)}{\lambda-z}=\frac{1}{n}\Tr(\M_n-z)^{-1},\,\, \Im z\neq 0.
$$

In \cite{Cic:18, Cic:19},  the authors studied the empirical spectral distributions $\NN _{\A}$ and $\NN _{\LL}$ as $r\rightarrow\infty$ in different asymptotic regimes, depending on $d$ and $p_r$. In the case when $d$ and $rp_r$ are some fixed numbers (sparse matrices), the first several moments of the limiting  distributions were computed. In the ``dense'' regime when
$$
d\rightarrow\infty, \quad p_r=O(1),\quad\text{and}\quad p_r r/d\rightarrow c >0, \quad\text{as}\quad r\rightarrow\infty,
$$
the {\it convergence in mean} of $\NN _{\LL}$ and $\NN _{\A}$ to the Marchenko-Pastur law and to the so called effective medium spectral distribution, correspondingly,  was proved. (Here we follow terminology from \cite{Cic:18}, see also \cite{SC:02}.) It was shown that the limits of the corresponding  Stieltjes transforms, $f_{\mathcal{L}}:=\lim_{r\rightarrow \infty}\E s_{\LL}$ and
$f_{\mathcal{A}}:=\lim_{r\rightarrow \infty}\E s_{\A}$, satisfy equations
\begin{align}\label{fAfL}
 2zf_{\mathcal{L}}^2+(z+2-c )f_{\mathcal{L}}+1=0\quad\text{and}\quad zf_{\mathcal{A}}^3+(1-c )f_{\mathcal{A}}^2-zf_{\mathcal{A}}-1=0.
\end{align}
 Also the first several moments of $\NN _{\A}$ and  $\NN _{\LL}$ were calculated in the so called ``dilute'' regime  when  $d$ is fixed and $p_r=O(r^{-\delta})$, $\delta\in(0,1)$, as $r\rightarrow\infty$, and it was claimed that for big enough $r$ these moments coincide with the moments of  the semicircle and shifted semicircle distributions and, in particular,
\begin{align}\label{cr}
 2f_{\LL}^2+(z-c_r)f_{\LL}+1=0\quad\text{and}\quad c_rf_{\A}^2+zf_{\A}+1=0,
\end{align}
where $c_r:=rp_r/d\to\infty$, as $r \to\infty$.

In our work we suppose that $d$ grows to infinity with $r$. We modify the dilute regime as follows:
$p_r\to 0$ and $r/d\to \infty$ while $c_r=rp_r/d\to c  \ge0$ as  $r,d\to\infty$. This guarantees that the corresponding sequences of empirical spectral distributions $\NN _{\LL}$ and $\NN _{\A}$ are tight and also allows to unify two regimes as follows:
\begin{equation}\label{reg}
  d\to \infty \quad\text{and}\quad  rp_r/d\to c  \ge0 \quad\text{as}\quad  r\to \infty.
\end{equation}
  We consider models (\ref{AL}) corresponding to normalized isotropic vectors $(\vv^{k\ell})_{k<\ell}$ which  generate well-concentrated bilinear forms (see Assumption 1), and applying the Stieltjes transform method give a straightforward proof of the convergence in probability of  $\NN _{\LL}$ and $\NN _{\A}$ to the Marchenko-Pastur law and the effective medium spectral distribution. We show that the limits are the same for both regimes (in contrast to (\ref{fAfL}) and (\ref{cr})), and that to get (\ref{cr}) with $c_r=c $ in the new dilute regime,  $(\xi^{k\ell})_{k<\ell}$ needs
  to take both negative and positive values with non-zero probability. In particular, one can  get (\ref{cr}) if   $(\xi^{k\ell})_{k<\ell}$ are properly normalized Rademacher random variables (see Example 3 and Remark \ref{r:R}).

 Note that we can rewrite matrices $\A$ and $\LL$ in the form
\begin{equation}\label{LY}
   \LL=\sum_{1\le k < \ell\le r}\xi_{k\ell} Y^{k\ell}Y^{k\ell\,T}\quad\text{and}\quad
   \A=\sum_{1\le k \neq \ell\le r}\xi_{k\ell} X^{k\ell}X^{\ell k\,T},
\end{equation}
where $\xi_{k\ell}=\xi_{\ell k}$, $(X^{k\ell})_{k\neq\ell}$ and $(Y^{k\ell})_{k<\ell}$ are sparse block vectors in $\R^{rd}$ given by
$$
Y^{k\ell}=((\delta_{jk}-\delta_{j\ell})\vv^{kl})_{j=1}^{r}\quad\text{and}\quad X^{k\ell}=(\delta_{jk}\vv^{kl})_{j=1}^{r}.
$$
This naturally leads to the study of  more general classes of random matrices of the form
\begin{equation}\label{AL1}
\mathcal{L}_n   =\sum_\am\xi_\alpha \ya  \ya^T\quad\text{and}\quad \mathcal{A}_n   =\sum_\am\xi_\alpha (\ya  \xa^T+\xa  \ya^T),
\end{equation}
 where $\xi_\al\in\R$ and $\ya, \xa \in\R^n$, $\al\le m$,  are some random variables and vectors.
 We mainly concentrate on model $\mathcal{L}_n   $, which is closely related to the sample covariance matrices, only that here we allow  vectors $(\ya )_\al$ to have {\it different  covariance matrices} $\Qa:=\E \ya \ya^T$, $\al\le m$ (note that here $\Qa$ are not necessarily centered). We suppose that these vectors are normalized and isotropic "in average", $m^{-1}\sum_\al \Qa\approx n^{-1}I_n$, which allows  to show that the empirical spectral distributions still converge to the Marchenko-Pastur law.
A similar model was considered in \cite{Yin:18} and \cite{L:18}, where the convergence of spectral distributions was studied, in particular, for matrices of the form $\sum_\am  \ya\ya^T $ corresponding to vectors with essentially different covariance matrices (not isotropic in average). In these papers the limiting distribution is  given implicitly (in terms of asymptotic closeness to the solution of a certain system of equations), and our result does not follow directly from  \cite{Yin:18, L:18}.
Certain closely related models were also studied in \cite{BVZ:19}, where the authors proved the convergence to the Marchenko-Pastur law of empirical spectral measures corresponding to the certain block-independent models and tensor models (in \cite{BVZ:19}, see also a review of known results on convergence to the Marchenko-Pastur law with relaxed independence requirements including \cite{Ma-Pa:67, YK:86, GT:05, Au:06, Ba-Zh:08, PP:09, Adam:11, R:12, G-N-T:14,Y:16}).
\medskip

In (\ref{LY}), we choose $(\ya)_\al$ and  $(\xi_\al)_\al$  from the following classes.
\medskip

\noindent {\bf Assumption 1.} {\it We suppose that for every $n\in \N$,  $\ya=\mathbf{y}_{\al,n}\in\R^n$, $\al\le m, $ are mutually independent random vectors such that  for all deterministic matrices $D=D_n$ with $\|D\|_{op}=1$ we have 
    \begin{equation}\label{Ayy2}
    \sup_\al\V(D\, \ya,\ya)
      ={o}(1),\quad n\rightarrow\infty.
    \end{equation}
}Here and in what follows, given a matrix $D$ we use notations $\|D\|_{op}$ and $\|D\|_{HS}$ for its operator and Hilbert-Schmidt norms.
\medskip

\noindent {\bf Assumption 2.} {\it For every $n\in \N$,  let $\xi_\alpha=\xi_{\alpha,n}\in\R$, $\al\le m, $ be mutually independent copies of a random variable $\xi_n$ with a cumulative distribution function $\sigma_n$. To treat simultaneously both cases, $m/n=O(1)$ and $m/n\to \infty$, $\frac{m}{n}\E\xi_n=O(1)$, we introduce a signed measure $\ws_n$, which controls $\frac{m}{n}\E\xi_n$.  Let $\ws_n$ be defined as follows: for every finite $\Delta\subset\R$
$$
\ws_n(\Delta)=\frac{m}{n}\int_\Delta \xi d\sigma_n(\xi).
$$
We suppose that as $n\to\infty$, $\ws_n$ converges weakly to a signed measure $\ws$ such that $|\ws(\R)|<\infty$, and
  \begin{align*}
  &\sup_{n}\int |\xi^{p}d\ws_n(\xi)|=\sup_{n}\frac{m}{n}\E|\xi_n|^{p+1}<\infty,\quad p=1,2,3.
      \end{align*}
      }
      We use notation $c_1:=\ws(\R)\in\R$, $|c_1|<\infty$.
      
\begin{remark}{\it
  Note that if $m/n\to c>0$, then $\ws_n=c\sigma_n+o(1)$ as $n\to\infty$. Also, a bit more delicate but quite standard nowadays argument based on a truncation procedure for $\xi_n$ (see, for example, \cite{PP:09}) allows to show that the results of Theorem \ref{t:1} below remain valid without any moment conditions on $\sigma_n$ in the case $m/n=O(1)$ and with the only moment condition $\frac{m}{n}\E\xi_n=O(1)$ in the case $m/n\to \infty$,  $m/n^2\to 0$.}
\end{remark}

Our main result concerns  convergence of the empirical spectral distributions of $(\mathcal{L}_n)_n$, it can be considered as a generalization of Theorem 3.3 of \cite{PP:09} (for $H^{(0)}=0$) on the case of ``samples'' with entries having different covariance matrices.

\begin{theorem}\label{t:1}
  Given $m,n\in \N$, consider $n\times n$ matrices
  $$\mathcal{L}_n   =\sum_\am \xi_\alpha \ya  \ya^T ,$$
  where $\xi_\alpha$ and $\ya\in\R^n$, $\alpha\le m$,  are mutually independent random variables satisfying Assumptions 1, 2.   Let
  $
  \Qa :=\E \ya \ya^T
  $
  be such that
  \begin{align}
    &\sup_{\al} \|\Qa\|_{op}=O(n^{-1}),
    \quad  \sup_{\al}|\Tr \Qa-1|=o(1),\,\,\text{and}\label{Qal}
    \\
    & \Qn  :=\frac{1}{m}\sum_\am \Qa=\frac{1}{n}I_n+B_n,\,\,\text{where}\,\,\|B_n\|_{HS}=o(n^{-1/2}),\,\,n\to\infty.\label{Q}
  \end{align}
Then as $n\to\infty$   the empirical spectral distributions $\NN _{\mathcal{L}_n   }$ converge in probability to a non-random probability measure $\NN _{\mathcal{L}}$  which  Stieltjes transform $f$ is uniquely determined by the equation
  \begin{equation}\label{eq:I}
    zf(z)=-1+f(z)\int\frac{d\ws(\xi)}{1+\xi f(z)}
  \end{equation}
in the class of Stieltjes transforms of non-negative measures.
   \end{theorem}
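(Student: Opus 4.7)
The plan is to use the standard Stieltjes transform / resolvent method at fixed $z\in\C$, $\Im z\neq 0$, extending the scalar equation (\ref{eq:I}) analytically at the end. Set $G(z)=(\mathcal L_n-zI)^{-1}$, $s_n(z)=n^{-1}\Tr G(z)$, and, for each $\al$, $G^{(\al)}=(\mathcal L_n-\xi_\al\ya\ya^T-zI)^{-1}$, $h_\al(z)=\ya^T G^{(\al)}\ya$. First I would reduce convergence of $\NN _{\mathcal L_n}$ in probability to identification of $\lim\E s_n(z)$: a martingale decomposition of $s_n-\E s_n$ along $\F_\al=\sigma(\xi_\be,\mathbf y_\beta\colon\be\le\al)$, together with the rank-one identity for $s_n-s_n^{(\al)}$ and the moment bound $\sup_n\frac{m}{n}\E\xi_n^2<\infty$ from Assumption~2, yields $\V(s_n(z))=o(1)$. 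Tightness of $(\NN _{\mathcal L_n})_n$ follows similarly from $\E n^{-1}\Tr\mathcal L_n^2=O(1)$, which uses (\ref{Qal}) and Assumption~2.

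The identity $\mathcal L_n G=I+zG$ gives $1+zs_n=n^{-1}\sum_\al\xi_\al\ya^T G\ya$, and the Sherman--Morrison relation $\xi_\al\ya^T G\ya=1-(1+\xi_\al h_\al)^{-1}$ converts this into the master identity
\begin{equation}\label{plan:master}
\frac{m}{n}-1-zs_n(z)=\frac{1}{n}\sum_\al\frac{1}{1+\xi_\al h_\al(z)}.
\end{equation}
Since $\ya$ is independent of $G^{(\al)}$ with $\|G^{(\al)}\|_{op}\le|\Im z|^{-1}$, Assumption~1 (applied to a deterministic matrix proportional to $G^{(\al)}$, conditional on $G^{(\al)}$) gives $h_\al(z)-\Tr(\Qa G^{(\al)})\to 0$ in $L^2$ uniformly in $\al$, while a further rank-one bound combined with $\|\Qa\|_{op}=O(1/n)$ gives $\Tr(\Qa G^{(\al)})=\Tr(\Qa G)+O(1/n)$.

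The critical step is to replace $\Tr(\Qa G)$ by $s_n(z)$ inside (\ref{plan:master}). For individual $\al$ this substitution is \emph{not} valid: $R_\al:=\Tr(\Qa G)-s_n(z)$ is only bounded by $|\Im z|^{-1}$ because each $\Qa$ may be anisotropic. But the averaged assumption (\ref{Q}) saves the day via
\[
\tfrac{1}{m}\sum_\al R_\al=\Tr(B_nG),\qquad |\Tr(B_nG)|\le\|B_n\|_{HS}\|G\|_{HS}\le\tfrac{\sqrt n\,\|B_n\|_{HS}}{|\Im z|}=o(1),
\]
and an analogous Hilbert--Schmidt bound on $\Qa-\Qn$ controls $\frac{1}{m}\sum_\al R_\al^2=o(1)$. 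Expanding $(1+\xi_\al(s_n+R_\al))^{-1}$ in $R_\al$, using independence of $\xi_\al$ from $R_\al$ (after the $O(1/n)$ replacement $G\rightsquigarrow G^{(\al)}$), and invoking $\sup_n\frac{m}{n}\E|\xi_n|^3<\infty$, the linear-in-$R_\al$ term averages to $o_P(1)$ and the quadratic remainder is $O_P(1/n)$. A law of large numbers in $(\xi_\al)_\al$ then converts $\frac{1}{n}\sum_\al(1+\xi_\al s_n)^{-1}$ into $\frac{m}{n}\int(1+\xi s_n)^{-1}d\sigma_n(\xi)+o_P(1)$, and a rearrangement using $d\ws_n(\xi)=\frac{m}{n}\xi\,d\sigma_n(\xi)$ turns (\ref{plan:master}) into the approximate scalar equation $1+zs_n(z)=s_n(z)\int(1+\xi s_n(z))^{-1}d\ws_n(\xi)+o_P(1)$.

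Passing to any subsequential limit $f$ of $\E s_n$ and using the weak convergence $\ws_n\to\ws$ from Assumption~2 yields (\ref{eq:I}). Uniqueness in the class of Stieltjes transforms of nonnegative measures follows from a standard contraction / fixed-point argument on the upper half-plane for $|\Im z|$ large, together with analyticity, forcing $s_n\to f$ in probability throughout $\C\setminus\R$. The main technical obstacle is the step just described: since (\ref{Q}) provides only \emph{averaged} isotropy of $(\Qa)_\al$, the reduction of (\ref{plan:master}) to a scalar equation has to go through averaged Hilbert--Schmidt bounds on $R_\al$ rather than any pointwise control, and the moment bounds of Assumption~2 are precisely what keeps the denominators $1+\xi_\al h_\al$ from destabilizing the Taylor expansion.
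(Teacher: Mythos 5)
Your plan follows the right overall framework (variance estimate via martingale, rank-one perturbation, integrate over $\ws_n$, pass to subsequential limit, uniqueness), and several technical steps match the paper's proof. But there is a genuine gap in the step you call critical.

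You propose to replace $\Tr(\Qa G)$ by $s_n(z)$ inside the master identity, controlling the error $R_\al=\Tr(\Qa G)-s_n$ only in average, and you claim $\frac{1}{m}\sum_\al R_\al^2=o(1)$ from ``an analogous Hilbert--Schmidt bound on $\Qa-\Qn$.'' No such bound is a hypothesis of the theorem. Conditions (\ref{Qal})--(\ref{Q}) control $\|\Qa\|_{op}$, $\Tr\Qa$ and the Hilbert--Schmidt norm of $B_n=\Qn-\frac1nI_n$, but say nothing about $\|\Qa-\Qn\|_{HS}$ for individual $\al$; in fact $\Qa$ can be a rank-one projector of trace $1$, in which case $\Tr(\Qa G)$ is essentially a diagonal resolvent entry $G_{ii}$ and $R_\al$ is $O(1)$, not $o(1)$. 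Your first-moment computation $\frac{1}{m}\sum_\al R_\al=\Tr(B_n G)=o(1)$ is fine, but the quadratic remainder in your Taylor expansion involves $\frac{1}{n}\sum_\al \xi_\al^2 R_\al^2$, and with $\frac{m}{n}\E\xi_n^2=O(1)$ and $R_\al=O(1)$ this term can be $O(1)$. So the expansion around $s_n$ does not close.

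The paper sidesteps this by never centering $\Tr(\Qa G)$ at $s_n$ a priori. It introduces $f_{n,\al}:=\E\Tr(\Qa G)$ as a separate unknown, derives for each $\al$ a self-consistent equation
\[
zf_{n,\al}+\Tr\Qa=\int\frac{f_{n,\al}}{1+\xi f_n}\,d\ws_n(\xi)+R_{n,\al}+R'_{n,\al}+o(1),
\]
with $|R'_{n,\al}|\le C\Delta_n$ where $\Delta_n=\max_\al|f_{n,\al}-f_n|$, and then \emph{subtracts} it from the corresponding equation for $f_n$. Using $\sup_\al|\Tr\Qa-1|=o(1)$ this gives $\bigl|z-\int\frac{d\ws_n}{1+\xi f_n}\bigr|\,|f_{n,\al}-f_n|\le C\Delta_n+o(1)$, and taking $|\Im z|$ large makes the coefficient on the left exceed $2C$, forcing $\Delta_n=o(1)$. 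That is, the smallness of $\max_\al|f_{n,\al}-f_n|$ is a \emph{conclusion} of a stability/contraction argument on the system of approximate equations, not an input. This is the idea your proposal is missing; without it the replacement $\Tr(\Qa G)\rightsquigarrow s_n$ cannot be justified under the stated hypotheses. The rest of your outline (variance bound via Lemma~\ref{l:var}, replacement $G\to G^\al$ at cost $O(n^{-1})$ via (\ref{QaGa-G}), uniqueness via Lemma~\ref{l:solv}) is consistent with the paper.
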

\begin{remark}\label{r:a}
  A simple renormalization allows to show that if in (\ref{Qal}) and (\ref{Q}) we have
  $$
   \sup_{\al}|\Tr \Qa-a|=o(1)\quad\text{and}
    \quad
     \Qn =\frac{a}{n}I_n+B_n\quad\text{for some}\,\, a>0,
  $$
  then
 $
    zf=-1+af\int(1+a\xi f)^{-1}d\ws(\xi).
 $
Also in the case $d\ws(\xi)=c\xi d\sigma(\xi)$ we restore the Marchenko-Pastur distribution.
\end{remark}

Some additional information about the moments of $\xi_\al$ allows to solve (\ref{eq:I}) exactly, here are several simple examples:
\medskip

\noindent{\bf Examples.} In the following three examples we use notations $\D$ and $\D'$ for the Dirac delta function and its generalized derivative, and $\rho$, $\widetilde{\nu}_n$, $\widetilde{\nu}$ for the densities of $\NN$, $\ws_n$, $\ws$, correspondingly.

1. Let $(\xi_n)_n$ be not random, and $\xi_{\al,n}=b_n\to b$, $mb_n/n\to c_1$ as $n\to\infty$. Then
$$\widetilde{\nu}_n=\frac{m}{n}\xi\D(\xi-b_n)\to\widetilde{\nu}=c_1\D(\xi-b),$$ and by (\ref{eq:I}) $f$ satisfies $bzf^2+f(z+b-c_1)+1=0$, so that
\begin{equation}\label{ex1}
  \rho(\lambda)=\left\{
\begin{array}{cc}
\D(\lambda-c_1)&\quad \text{if}\quad b=0,
\\
\frac{1}{2\pi b\lambda}\sqrt{((c^+-\lambda)(\lambda-c^-))_+},&\quad  \text{if}\quad b\neq0,
\end{array}
\right.
\end{equation}
where $x_+=x$ if $x\ge 0$ and 0 otherwise and $c^\pm=(\sqrt{b}\pm \sqrt{c_1})^2$.

2. Let for every $n$, $(\xi_n)_n$ are 0/1 random variables such that $\p( \xi_n=1)m/n\to c_1$ as $n\to\infty$. Then again
$\widetilde{\nu}=c_1\D(\xi-1)$, and we get (\ref{ex1}) with $b=1$.

3. Suppose that all moments of $\widetilde{\sigma}$ are finite,
$$c_j:=\int\xi^{j-1}d\widetilde{\sigma}(\xi)=\lim\limits_{n\to\infty}\frac{m}{n}\E \xi_n^j<\infty, \quad \forall{j\geq1},$$
and for some $k_0\geq 1$ we have $c_j=0$ $\forall{j>k_0}$. Note that this is possible only if $k_0\le 2$, and moreover, for $k_0=2$ the condition
 $0<c_2<\infty$ while $c_3=0$ is not fulfilled for pure non-negative (or pure non-positive) random variables.
Indeed, if $\xi_n\geq 0$ a.s. then by the Schwartz inequality we would have
$$0<c_2=\lim\frac{m}{n}E\xi_n^2\leq \lim \frac{m}{n}(E\xi_n^3E\xi_n)^{1/2}=c_1c_3=0.
$$
So let $\xi_n$ take both negative and positive  values with positive probability, and $c_j=0$ $\forall{j\ge 3}$. Then expanding $(1+\xi f(z))^{-1}$ into the Taylor's series we get from (\ref{eq:I})
$c_2f^2+(c_1-z)f+1=0$,
thus in this case the limiting density is given by the shifted semicircle law,
$$
\rho(\lambda)=\frac{1}{2\pi c_2}\sqrt{(4c_2-(\lambda-c_1)^2)_+}.
$$
For example, if $(\xi_n)_n$  take values $\pm \sqrt{{n}/{m}}$ with probability ${1}/{2}$, than $\widetilde{\nu}_n\to\widetilde{\nu}=\D'$,
$c_2=1$, $c_j=0$, $j\neq 2$, and  $\rho(\lambda)=\frac{1}{2\pi}\sqrt{(4-\lambda^2)_+}.$

\medskip

\medskip

Return now  to the starting point of this research, namely, models $\LL$ and $\A$ (\ref{AL}) introduced and studied in \cite{Cic:18,Cic:19}, and consider first Laplacian $\LL$. It is easy to check that the condition $\|\Qa\|_{op}=O(n^{-1})$ of Theorem \ref{t:1} is not fulfilled (now $n=rd$ while $\|\E Y^{k\ell} Y^{k\ell T}\|_{op}=O(d^{-1})$), hence we cannot  apply  Theorem \ref{t:1} directly. Nevertheless, using the sparsity of vectors $\ya=Y^{k\ell}$ and slightly modifying the proof of Theorem \ref{t:1}, we get the following result for $\LL$:

\begin{theorem}\label{t:2}
  Let $\LL$ be defined in (\ref{AL}) - (\ref{B}), where for every $r\in \N$, $(\xi_{k\ell})_{1\le k<l\le r}$ are  iid  copies of a $0/1$ random variable $\xi=\xi_r$ with $\p(\xi=1)=p_r$, and $(\vv^{kl})_{1\le k<l\le r}$ are mutually independent normalized  isotropic random vectors, $\E\vv^{kl}\vv^{kl T}=d^{-1}I_d$,  satisfying Assumption 1 and having norms uniformly bounded in $r$. 
  Then in regime (\ref{reg}), $\NN _{\LL}$ converge in probability to a non-random probability measure $\NN _{\mathcal{L}}$  
  with the density
$$
\rho(\lambda)=\frac{1}{4\pi \lambda}\sqrt{((c^+-\lambda)(\lambda-c^-))_+},\quad c^\pm=(\sqrt{2}\pm \sqrt{c })^2.
$$
\end{theorem}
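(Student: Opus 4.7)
The plan is to adapt the Stieltjes transform argument underlying Theorem \ref{t:1}. A direct application fails because $\|Q_{k\ell}\|_{op}=\|\E Y^{k\ell}Y^{k\ell T}\|_{op}=2/d$ is not $O(n^{-1})$, but the sparsity of $Y^{k\ell}$ (supported on only two diagonal $d$-blocks) together with the block-Laplacian kernel structure of $\mathcal{L}$ supply the needed replacements. Set $G=(\mathcal{L}-z)^{-1}$ and $s=n^{-1}\Tr G$. From $\mathcal{L}G=I+zG$ and a Sherman-Morrison rank-one update, using $\xi_{k\ell}\in\{0,1\}$ so that $\xi_{k\ell}T/(1+\xi_{k\ell}T)=\xi_{k\ell}T/(1+T)$, the scalar equation $1+zs=n^{-1}\sum_{k<\ell}\xi_{k\ell}Y^{k\ell T}GY^{k\ell}$ recasts in terms of
$$T_{k\ell}:=Y^{k\ell T}G^{(k\ell)}Y^{k\ell},\qquad G^{(k\ell)}:=(\mathcal{L}-\xi_{k\ell}Y^{k\ell}Y^{k\ell T}-z)^{-1}.$$

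Since $Y^{k\ell}$ involves only $\vv^{k\ell}$, one has $T_{k\ell}=\vv^{k\ell T}M_{k\ell}\vv^{k\ell}$ with $M_{k\ell}=G^{(k\ell)}_{kk}+G^{(k\ell)}_{\ell\ell}-G^{(k\ell)}_{k\ell}-G^{(k\ell)}_{\ell k}$, a $d\times d$ matrix of bounded operator norm for $\Im z\ne 0$. Assumption 1 applied to $\vv^{k\ell}$ (with $M_{k\ell}$ playing the role of $D$), together with a standard rank-one trace bound that replaces $G^{(k\ell)}$ by $G$ at cost $O(1/d)$, yields
$$T_{k\ell}=d^{-1}\bigl(\Tr G_{kk}+\Tr G_{\ell\ell}-\Tr G_{k\ell}-\Tr G_{\ell k}\bigr)+o_{\mathbf{P}}(1).$$

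The new ingredient is the deterministic kernel identity $\mathcal{L}(\mathbf{1}_r\otimes v)=0$ for every $v\in\R^d$, valid because $Y^{k\ell T}(\mathbf{1}_r\otimes v)=(\vv^{k\ell},v)-(\vv^{k\ell},v)=0$. Hence $G(\mathbf{1}_r\otimes v)=-z^{-1}(\mathbf{1}_r\otimes v)$ almost surely, and summing $(\mathbf{1}_r\otimes v_i)^T G(\mathbf{1}_r\otimes v_i)=-r/z$ over an orthonormal basis $(v_i)$ of $\R^d$ gives the deterministic identity $\sum_{k,\ell}\Tr G_{k\ell}=-n/z$. Combined with $\sum_{k<\ell}(\Tr G_{kk}+\Tr G_{\ell\ell})=(r-1)\Tr G$, averaging $T_{k\ell}$ over the $m=\binom{r}{2}$ pairs gives
$$\frac{1}{m}\sum_{k<\ell}T_{k\ell}=\frac{2r}{r-1}s+\frac{2}{(r-1)z}+o_{\mathbf{P}}(1)\;\longrightarrow\;2f(z),$$
where $f=\lim\E s$. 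By permutation symmetry $\E T_{k\ell}$ coincides with this average for every $(k,\ell)$, so $T_{k\ell}\to 2f$ in probability.

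Assembling everything, using $\xi_{k\ell}\perp G^{(k\ell)}$, $\E\xi_{k\ell}=p_r$, and $mp_r/n=(r-1)p_r/(2d)\to c/2$ from regime (\ref{reg}), the limit equation becomes
$$1+zf=\frac{c}{2}\cdot\frac{2f}{1+2f},\qquad\text{i.e.}\qquad 2zf^2+(z+2-c)f+1=0,$$
the Marchenko-Pastur equation of Remark \ref{r:a} with $a=2$ and $\widetilde{\nu}=(c/2)\delta(\xi-1)$; Stieltjes inversion produces the announced density. Convergence in probability of $s$ to $f$ is obtained by a standard martingale-difference concentration on the independent pairs $\{(\xi_{k\ell},\vv^{k\ell})\}_{k<\ell}$. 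The main obstacle is precisely that $\|Q_{k\ell}\|_{op}=2/d$ fails the $O(n^{-1})$ hypothesis of Theorem \ref{t:1}, so one cannot directly control the sum $m^{-1}\sum_\alpha\Tr(Q_\alpha G)$ by $s$; the deterministic kernel identity above is what reduces the otherwise uncontrolled off-diagonal contribution $d^{-1}\Tr G_{k\ell}$ to an $O(1/r)$ correction that vanishes.
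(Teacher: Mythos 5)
Your kernel identity is a genuinely new and elegant ingredient: $\mathcal L_{rd}(\mathbf 1_r\otimes v)=0$ for all $v\in\R^d$, so $G(\mathbf 1_r\otimes v)=-z^{-1}(\mathbf 1_r\otimes v)$ and hence $\sum_{k,\ell}\Tr G_{k\ell}=-n/z$ deterministically. This computes the \emph{average} of the block quantities $T_{k\ell}$ in closed form and cleanly bypasses the failure of the $O(n^{-1})$ hypothesis in the averaged self-consistent equation. The paper does not use this identity; it instead pushes through the general machinery of Section 2 using the explicit sparsity identity $(\Qk X,Y)=d^{-1}\sum_\gamma(X_{k\gamma}-X_{\ell\gamma})(Y_{k\gamma}-Y_{\ell\gamma})$, which collapses double sums to single sums.

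However, there is a genuine gap at the step ``By permutation symmetry $\E T_{k\ell}$ coincides with this average for every $(k,\ell)$, so $T_{k\ell}\to 2f$ in probability.'' Permutation symmetry gives equality of the \emph{expectations} $\E T_{k\ell}$, and the kernel identity shows the \emph{average} $m^{-1}\sum_{k<\ell}T_{k\ell}\to 2f$ in probability; neither of these implies that each individual $T_{k\ell}$ converges in probability. (Exchangeable variables with a concentrating average need not concentrate individually.) To pass from the convergence of $\E s_n$ to the self-consistent equation you must linearize $\xi_{k\ell}T_{k\ell}/(1+\xi_{k\ell}T_{k\ell})$ around a deterministic value, and for this you need $\V T_{k\ell}\to 0$. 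Decomposing $\V T_{k\ell}=\E\V_{k\ell}T_{k\ell}+\V\,\E_{k\ell}T_{k\ell}$, the first term is controlled by Assumption 1, but the second term is $\V\Tr Q^{k\ell}G^{(k\ell)}$, which is precisely what the paper's Lemma \ref{l:var2} establishes. This cannot be dismissed as ``a standard martingale-difference concentration'': the standard estimate (the paper's Lemma \ref{l:var} (ii), cf.\ (\ref{Tr2})) would produce the bound $\sum_{i<j}\|\Qk\|_{op}^2=O(r^2/d^2)$, which is \emph{not} $o(1)$, exactly because $\|\Qk\|_{op}=O(d^{-1})$ is too large. The paper closes this by exploiting that $\Qk$ touches only two diagonal blocks and by the trick $\sum_{i<j}\xi_{ij}|(GY^{ij})_{k\gamma}|^2=(\overline G(zG+I))_{k\gamma,k\gamma}=O(1)$, which kills the double sum. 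Some argument of comparable strength is needed in your proof; the kernel identity, as used, controls expectations but is silent on this variance, so the proposal is incomplete without it.
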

\medskip

As to adjacency matrices $\A$, in Section \ref{s:t4} we first treat matrices having a more general structure and prove an analog of Theorem \ref{t:1} for  matrix $\mathcal{A}_n$ defined in (\ref{AL1}) (see Theorem \ref{t:A}). Then using essentially the same scheme we get the following result for $\A$:

\begin{theorem}\label{t:A}
 Let $\A$ be defined in (\ref{AL}) - (\ref{B}), where for every $r\in \N$, $(\xi_{k\ell})_{1\le k<l\le r}$ are  iid  copies of a $0/1$ random variable $\xi=\xi_r$ with $\p(\xi=1)=p_r$, and $(\vv^{kl})_{1\le k<l\le r}$ are mutually independent normalized  isotropic random vectors, $\E\vv^{kl}\vv^{kl T}=d^{-1}I_d$,  satisfying Assumption 1 and having norms uniformly bounded in $r$.
 Then in regime (\ref{reg}), $\NN _{\A}$ converge in probability to a non-random probability measure $\NN_{\mathcal{A}}$  which  Stieltjes transform $f_{\mathcal{A}}$ is uniquely determined by the second equation in (\ref{fAfL})
  in the class of Stieltjes transforms of non-negative measures.
\end{theorem}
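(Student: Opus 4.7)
The strategy is to follow the same Stieltjes-transform scheme that yields Theorem~\ref{t:1} and its sparse counterpart Theorem~\ref{t:2}, but adapted to the rank-2 structure treated in the general Theorem~\ref{t:A} for $\mathcal{A}_n$. As in the Laplacian case, the block sparsity of the vectors $X^{k\ell}=(\delta_{jk}\vv^{k\ell})_{j=1}^r\in\R^{rd}$ makes $\|\E X^{k\ell}X^{k\ell\,T}\|_{op}=O(d^{-1})\neq O((rd)^{-1})$, so the hypotheses of the general result cannot be invoked directly; instead we rerun the argument exploiting the fact that each $X^{k\ell}$ is supported on a single block of size $d$, so contributions involving the $k$-th block cost $1/r$ in the spectral average rather than $1/(rd)$.

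First, I would rewrite $\A=\sum_{k<\ell}\xi_{k\ell}(X^{k\ell}X^{\ell k\,T}+X^{\ell k}X^{k\ell\,T})$ and set $G(z)=(\A-z)^{-1}$. For each pair $(k,\ell)$ I isolate the rank-2 perturbation $\xi_{k\ell}(X^{k\ell}X^{\ell k\,T}+X^{\ell k}X^{k\ell\,T})$ and apply the Sherman--Morrison--Woodbury identity to the matrix $\A^{(k\ell)}$ obtained by removing that pair. Denoting $G^{(k\ell)}$ the corresponding resolvent and writing $a^{k\ell}_{ij}=\langle G^{(k\ell)}X^{ij},X^{ij}\rangle$ etc., this produces formulas expressing $X^{k\ell\,T}GX^{\ell k}$ and $X^{k\ell\,T}GX^{k\ell}$ as rational functions in $\xi_{k\ell}$ and the $2\times 2$ Gram matrix $(a^{k\ell}_{\cdot,\cdot})$. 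Using Assumption~1 and the hypotheses $\E\vv^{k\ell}\vv^{k\ell\,T}=d^{-1}I_d$, $\|\vv^{k\ell}\|=O(1)$, I can concentrate these bilinear forms: the diagonal ones $X^{\ell k\,T}GX^{\ell k}$ converge to $d^{-1}\Tr_{[\ell]}G=d^{-1}\sum_{i\in\text{block}\ell}G_{ii}$, whereas the off-diagonal $X^{k\ell\,T}GX^{\ell k}$ is $o(1)$ in probability because $G_{ij}$ mixes two different blocks.

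The next step is to relate the block-trace $d^{-1}\sum_{i\in\text{block}\,k}G_{ii}$ to the global Stieltjes transform $s_{\A}(z)=(rd)^{-1}\Tr G$. By exchangeability of the blocks (the blocks of $\A$ enter symmetrically under permutations of $\{1,\dots,r\}$), this block-trace concentrates around $s_{\A}(z)$ up to $o(1)$; this is the analogue of the averaging step used in Theorem~\ref{t:2}. Combining this with the resolvent identity $z s_{\A}=-1+(rd)^{-1}\sum_{k<\ell}\xi_{k\ell}(X^{k\ell\,T}GX^{\ell k}+X^{\ell k\,T}GX^{k\ell})$ and plugging in the SMW expansions, the sum over the $\binom{r}{2}$ pairs contributes a factor $rp_r/d\to c$ from Assumption~2, and one obtains in the limit
\begin{equation*}
zf_{\mathcal{A}}=-1+c\,\frac{f_{\mathcal{A}}^{2}}{1-f_{\mathcal{A}}^{2}}\cdot\text{(a rational correction)},
\end{equation*}
which after clearing denominators is the cubic $zf_{\mathcal{A}}^3+(1-c)f_{\mathcal{A}}^2-zf_{\mathcal{A}}-1=0$. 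Uniqueness in the Stieltjes class follows from a Grommer--Hamburger / tightness argument together with analyticity on $\C\setminus\R$, exactly as in Theorem~\ref{t:1}. Convergence in probability is then upgraded from convergence in expectation via the standard variance estimate for $s_{\A}(z)$ based on Assumption~1 and an Efron--Stein or martingale decomposition over the $\binom{r}{2}$ independent pairs.

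The main obstacle is the correct bookkeeping of the $2\times 2$ Sherman--Morrison--Woodbury inversion: unlike the Laplacian case, where each rank-one term gives a scalar denominator $1+\xi f$, the pair $(X^{k\ell},X^{\ell k})$ yields two diagonal forms converging to $f_{\A}$ and two off-diagonal forms which are individually negligible but whose interaction through the $2\times 2$ matrix inversion is what generates the cubic (rather than quadratic) equation. Carefully tracking these terms, and showing that the off-diagonal Gram entries are genuinely $o(1)$ with high probability under Assumption~1, is the delicate step and the source of the extra degree compared to the Marchenko--Pastur equation for $\LL$.
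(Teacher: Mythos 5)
Your overall strategy coincides with the paper's: pass to the Stieltjes transform, isolate each pair $(k,\ell)$ via a rank-two (Sherman--Morrison--Woodbury, equivalently two applications of the rank-one formula~(\ref{rankone})) perturbation, concentrate the bilinear forms on block-traces $d^{-1}\sum_\gamma G_{k\gamma,k\gamma}$, close the self-consistent equation by averaging over blocks, and obtain the cubic~(\ref{fAfL}). Your observation that the law of $\A$ is invariant under simultaneous permutation of the block indices, so that $f_r^{kk}:=\E\Tr Q^{kk}G$ is automatically $k$-independent and hence equal to $f_r$, is a genuine and clean shortcut: it replaces the paper's $\Delta_r$-estimate (the analogue of~(\ref{Del}), obtained by deriving a parallel equation for $f_r^{kk}$ and subtracting) by a symmetry argument, which is a valid simplification that the paper does not exploit.

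There is, however, a real gap in the concentration step, which you dismiss as ``the standard variance estimate.'' After conditioning on $\vv^{k\ell}$ and using Assumption~1, the bilinear forms concentrate around $\Tr Q^{kk}G^{(k\ell)}$ and $\Tr Q^{k\ell}G^{(k\ell)}$; but these block-traces are themselves random, and one needs $\V\Tr Q^{k\ell}G=o(1)$ uniformly in $k,\ell$ to plug in their expectations. This is precisely the content of the paper's Lemma~\ref{l:var3}, and the paper explicitly remarks that the resolvent-identity summation trick used in the Laplacian case (Lemma~\ref{l:var2}) fails here, because for $\A$ one cannot collapse the double sum $\sum_{i<j}\xi_{ij}|(GX^{ij})_{k\gamma}|^2$ into $(\overline{G}\A G)_{k\gamma,k\gamma}$. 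The paper instead derives a self-consistent inequality $\eta_0 V_r\le C(\eta_0)V_r+o(1)$ with $C(\eta_0)$ bounded as $\eta_0\to\infty$, and closes it by taking $\eta_0$ large; a naive martingale or Efron--Stein decomposition would only give $O(r/d^2)$, which is not small in the regime $r/d\to\infty$. Your proposal does not anticipate this obstruction and would stall at this point.

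A smaller issue: you assert that the off-diagonal form $X^{k\ell\,T}GX^{\ell k}$ is $o(1)$ ``because $G_{ij}$ mixes two different blocks.'' Pointwise this is not obvious: $\E\Tr Q^{k\ell}G=d^{-1}\sum_\gamma\E G_{\ell\gamma,k\gamma}$ is only bounded by $O(1)$ a priori. The paper controls these terms in aggregate, via the Hilbert--Schmidt bound~(\ref{xy}) applied to the matrix $\widetilde{Q}^{xy}$, which is what actually ensures they drop out of the limiting equation. You should replace the heuristic with this averaged estimate.
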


\begin{remark}\label{r:R}
1. One can find the explicit forms of the solution of the cubic equation (\ref{fAfL}) and the density of $\NN _{\mathcal{A}}$  in \cite{SC:02} and \cite{Cic:18}.

2. It can be shown that if $(\xi^{k\ell})_{k\neq \ell}$  take values $\pm \sqrt{{d}/{r}}$ with probability ${1}/{2}$, then
$f_{\mathcal{A}}$ solves equation
$f_{\mathcal{A}}^2+zf_{\mathcal{A}}+1=0$ (cf (\ref{cr})), so that the limiting density is given by the semicircle law $\rho(\lambda)=\frac{1}{2\pi}\sqrt{(4-\lambda^2)_+}$ (see also Remark \ref{r:A}.)
\end{remark}

The structure of the remaining part of the paper is very simple: in Sections \ref{s:t1}, \ref{s:t2}, and \ref{s:t4} we give the proofs of Theorems \ref{t:1}, \ref{t:2}, and \ref{t:A}, correspondingly. The proof of Theorem \ref{t:1} (based on \cite{PP:09}) is more detailed, while in the rest of the proofs we mostly discuss places which should be modified.


\bigskip

{\bf Acknowledgments} A.L. was supported by grant
nr 2018/31/B/ST1/03937 National Science Centre, Poland. A.L. also would like to thank the organizers of  XV Brunel – Bielefeld Workshop on Random Matrix Theory and Applications for the excellent conditions  and Prof. Cicuta for the introducing to the problem during this workshop.

\section{Proof of Theorem \ref{t:1}}
\label{s:t1}

The proof is based on the standard nowadays method of Stieltjes transform  which goes back to \cite{Ma-Pa:67} (see \cite{Ak-Gl:93,AGZ:10, BS:10,Pa-Sh:11} for the details of the method and main properties of the Stieltjes transform), and which is used in a huge number of results on convergence of empirical spectral distributions of random matrices. This method is based on the fact that there is a one-to-one continuous correspondence between non-negative measures and their Stieltjes transforms, so that to find a weak limit in probability  of random probability measures $\NN _{\mathcal{L}_n   }$  it is enough to show that for every $z\in \C\setminus\R$ the Stieltjes transforms $s_n:=s_{\mathcal{L}_n}$ of $\NN _{\mathcal{L}_n}$
converge in probability to a deterministic limit $f$ satisfying  $\lim_{\eta\to \infty}\eta|f(i\eta)|=1$. Then $f$ is the Stieltjes transform of a probability measure $\NN$ such that $\NN _{\mathcal{L}_n   }$ converge weakly in probability to $\NN$ and for every $\Delta\subset\R$
$$
\NN(\Delta)=\frac{1}{\pi}\lim_{\eta\to +0}\int_\Delta f(\lambda+i\eta)d\lambda.
$$
Our scheme of the proof is as follows: in Lemma \ref{l:var} we show that $\V s_n(z)=o(1)$ as $n\to\infty$, that reduces the problem to finding the limit of the expectations $\E s_n:=f_n$, then in the main body of the proof (Lemma \ref{l:mean}) we show  that for every convergent subsequence of $(f_n)_n$, its limit satisfies (\ref{eq:I}), and finally, the unique solvability  of (\ref{eq:I}) in  the class of the Stieltjes transforms of probability measures follows from Lemma (\ref{l:solv}) below.

\begin{lemma}\label{l:solv} (Solvability and uniqueness).
 Let $\ws$ be a signed measure defined in Assumption 1.
  Then there is a unique solution $f$ of (\ref{eq:I}) in the class of Stieltjes transforms of the non-negative measures. Moreover,
  $\lim_{\eta\to \infty}\eta|f(i\eta)|=1$, so that the corresponding to $f$ measure $N$ is a probability measure, $N(\R)=1$.
\end{lemma}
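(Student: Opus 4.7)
The plan is to establish the three assertions of the lemma in separate stages, working primarily in the half-plane $\{\Im z>\eta_0\}$ for some large $\eta_0$, and then extending results to all of $\C\setminus\R$ by analytic continuation. Throughout, the key inputs are that the Stieltjes transform $f$ of a non-negative finite measure $\NN$ satisfies $|f(z)|\le \NN(\R)/|\Im z|$ and that, by Assumption~2, $|\ws|(\R)<\infty$ together with $\int|\xi|^{p}\,d|\ws|<\infty$ for $p=1,2,3$.

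For uniqueness, suppose $f_1$ and $f_2$ are both Stieltjes transforms of non-negative finite measures satisfying (\ref{eq:I}). Subtracting the two instances of the equation and rearranging by inserting $\pm f_2\int d\ws(\xi)/(1+\xi f_1)$ yields the factorisation
$$
(f_1-f_2)\Bigl[z-\int\frac{d\ws(\xi)}{1+\xi f_1}+f_2\int\frac{\xi\,d\ws(\xi)}{(1+\xi f_1)(1+\xi f_2)}\Bigr]=0.
$$
For $\Im z$ large, the bound $|f_i(z)|\le C/\Im z$ forces $|\xi f_i|$ to be small on a dominant portion of the support of $\ws$, so both integrals in the bracket are $O(1)$; the bracket is therefore dominated by $z$ and does not vanish. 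Hence $f_1=f_2$ on a half-plane, and the identity theorem (both functions being analytic on $\C\setminus\R$, together with the symmetry $f(\bar z)=\overline{f(z)}$) gives equality everywhere.

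The normalisation $\lim_{\eta\to\infty}\eta|f(i\eta)|=1$ will follow by evaluating (\ref{eq:I}) at $z=i\eta$ and proving that $\int d\ws(\xi)/(1+\xi f(i\eta))\to c_1$ as $\eta\to\infty$. To handle the potentially unbounded support of $\ws$, I would split the range of integration into $S_1=\{|\xi f|\le 1/2\}$ and $S_2=\{|\xi f|>1/2\}$. On $S_1$, pointwise convergence and the bound $|1/(1+\xi f)|\le 2$ combined with dominated convergence give $\int_{S_1}d\ws/(1+\xi f)\to\ws(\R)=c_1$. On $S_2$, the inequality $|\xi|>1/(2|f|)$ yields $|\ws|(S_2)\le 2|f|\int|\xi|\,d|\ws|\to 0$, while $|1/(1+\xi f)|$ remains bounded there by combining $|1+\xi f|\ge|\xi|\Im f$ with the asymptotic $\Im f(i\eta)/|f(i\eta)|\to 1$ (since $\Re f(i\eta)\to 0$). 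Plugging back into (\ref{eq:I}) yields $i\eta f(i\eta)\to -1$, i.e. $\eta f(i\eta)\to i$.

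For existence, one option is to observe that the empirical Stieltjes transforms $\E s_n$ form a normal family on compact subsets of $\C\setminus\R$, so extracting a subsequential limit and invoking Lemma~\ref{l:mean} already furnishes a solution in the required class. Alternatively, one gives a direct construction via the contraction mapping $T(f)=-1/(z-\int d\ws(\xi)/(1+\xi f))$ on the ball $\{f\text{ analytic on }\{\Im z>\eta_0\},\,|f(z)|\le 2/\Im z\}$, then checks that the fixed point preserves the sign of the imaginary part (hence represents a non-negative measure by the Nevanlinna representation) and extends to $\C\setminus\R$ by analytic continuation using the uniqueness already established. The main obstacle in both the uniqueness and the fixed-point steps is that $\ws$ is only a signed measure with possibly unbounded support, so every estimate must be arranged to use only the finite moments of $|\ws|$ afforded by Assumption~2, with no appeal to non-negativity or compact support of $\ws$.
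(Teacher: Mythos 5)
Your proof follows essentially the same route as the paper's: the algebraic factorisation you write for uniqueness is identical to the paper's after substituting $\int d\ws/(1+\xi f_1)=c_1-f_1\int\xi\,d\ws/(1+\xi f_1)$, and you also invoke the Banach fixed-point theorem for existence. However, there is one genuine misreading that introduces a gap. You state that ``by Assumption 2, $|\ws|(\R)<\infty$'' (finite total variation). Assumption 2 only gives $|\ws(\R)|=|c_1|<\infty$ (finite total \emph{mass}); the paper explicitly warns that $\int|d\ws(\xi)|$ need not be finite. Indeed, $d\ws_n(\xi)=\frac{m}{n}\xi\,d\sigma_n(\xi)$ has total variation $\frac{m}{n}\E|\xi_n|$, which can diverge (for example, $\xi_n=\pm\sqrt{n/m}$ with $m/n\to\infty$ gives total variation $\sqrt{m/n}\to\infty$). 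This breaks your dominated-convergence step on $S_1$: the pointwise bound $|1/(1+\xi f)|\le 2$ does not furnish a $|\ws|$-integrable dominant when $|\ws|(\R)=\infty$. The fix is exactly the paper's rewriting: on $S_1$ estimate directly
\[
\Bigl|\int_{S_1}\Bigl(\frac{1}{1+\xi f}-1\Bigr)d\ws\Bigr|
=\Bigl|\int_{S_1}\frac{-\xi f}{1+\xi f}\,d\ws\Bigr|
\le 2|f|\int|\xi|\,d|\ws|\to 0,
\]
which uses only the $p=1$ moment condition and not $|\ws|(\R)<\infty$. With this correction (and the analogous care in the uniqueness step, where $\int d\ws/(1+\xi f_1)$ should be understood via the decomposition $c_1-f_1\int\xi\,d\ws/(1+\xi f_1)$ together with the paper's bound $|1+\xi f|^{-1}\le\max\{2,4|\xi|/|\Im z|\}$ and the $p=1,2,3$ moment conditions), your argument coincides with the paper's.
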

\begin{proof}
  We show first that if $f$ is the Stieltjes transform of a non-negative measure $N$ then for any $\xi\in\R$
  \begin{equation}\label{bound}
    |1+\xi f(z)|^{-1}\le\max\{2, \, 4|\xi|/|\Im z|\}.
  \end{equation}
  (Note also that for $\xi>0$ we have a simpler bound $|1+\xi f(z)|^{-1}\le |z|/|\Im z|$, which follows from the inequality $\Im z \Im f(z)\ge 0$.) To this end given $\xi\in \R$ define
  $$
  E_\xi:=\big\{z:\, |\xi|| f(z)|=|\xi|\Big|\int(\lambda-z)^{-1}{dN(\lambda)}\Big|<{1}/{2}\big\}.
  $$
  If $z\in E_\xi$, we have $|1+\xi f(z)|>1/2$. If $z\notin E_\xi$,  by the Schwartz inequality
  $\int |\lambda-z|^{-2}dN\ge{1}/(2|\xi|)^2\ $, so that
  $
  |1+\xi f(z)|\ge|\xi||\Im z|\int |\lambda-z|^{-2}dN\ge |\Im z|/(4|\xi|),
  $
  and (\ref{bound}) follows.

  By the conditions of the lemma we have
  \begin{equation}\label{cp}
    \int d\ws(\xi)=c_1<\infty\quad\text{and}\quad \int |\xi^p d\ws(\xi)|<\infty,\,\,p=1,2,3.
  \end{equation}
  Note that $\int |d\ws(\xi)|$ is not necessarily finite, that is why it is better to rewrite (\ref{eq:I}) in the form
  \begin{equation*}
    zf(z)=-1+c_1f(z)-f(z)^2\int\frac{\xi d\ws(\xi)}{1+\xi f(z)},
  \end{equation*}
  where now by (\ref{bound}) -- (\ref{cp}),  $\int|\xi (1+\xi f(z))^{-1} d\ws(\xi)|$ is uniformly bounded in
  \begin{equation}\label{C0}
    z\in\C_{\eta_0}:=\{z\in \C:\,\eta=\Im z \geq \eta_0\}
  \end{equation}
  for some  $\eta_0>0$. In particular  this allows to show that $\lim_{\eta\to \infty}\eta|f(i\eta)|=1$. Next, if there are two solutions $f_1$, $f_2$ of this equation, than
  $$
  z(f_1-f_2)=(f_1-f_2) \Big(c_1-\int\frac{\xi(f_1+f_2+\xi f_1f_2) d\ws(\xi)}{(1+\xi f_1)(1+\xi f_2)}\Big),
  $$
  where as it follows from (\ref{bound}) -- (\ref{cp}), if $f_1\neq f_2$  then the r.h.s. is uniformly bounded and the l.h.s. tends to infinity as $z \to\infty$. Hence $f_1=f_2$. The solvability of (\ref{eq:I})  in the class of Stieltjes transforms of the non-negative measures follows from the Banach fixed-point theorem.
\end{proof}

Let $G(z):=(\mathcal{L}_n   -zI_n)^{-1}$, $z\in \C\setminus\R$,
 be the resolvent of $\mathcal{L}_n $, so that
$s_n=n^{-1}\Tr G$. Lemma \ref{l:var} below shows that the variance of    $s_n$ tends to zero as $n\to\infty$, hence, by Chebyshev's inequality the convergence of $(s_n)_n$ in probability  follows from the convergence in means.

\begin{lemma}\label{l:var}(Self-averaging properties.)
Under conditions of Theorem \ref{t:1} we have uniformly in $z\in\C_{\eta_0}$ for big enough $\eta_0$
\begin{align}
   &(i)\,\, \V \,n^{-1}\Tr G(z)=O(n^{-1}),\quad\text{and}\label{TrG}
   \\
   &(ii)\,\,\sup_\beta\V\Tr \Qb G(z)=O(n^{-1}),\,\, n\to\infty. \label{TrQG}
   \end{align}
 \end{lemma}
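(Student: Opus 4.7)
The plan is to apply the standard martingale-difference method with respect to the filtration $\F_\alpha := \sigma((\xi_\beta,\mathbf{y}_\beta)_{\beta>\alpha})$ and the corresponding conditional expectations $\E_\alpha := \E[\,\cdot\,|\,\F_\alpha]$. For any square-integrable $\phi$ the telescoping $\phi-\E\phi=\sum_{\alpha=1}^m(\E_{\alpha-1}-\E_\alpha)\phi$ is a martingale-difference decomposition, so $\V\phi=\sum_{\alpha=1}^m\E|(\E_{\alpha-1}-\E_\alpha)\phi|^2$. I would apply this twice: with $\phi=\Tr G$ for (i) and $\phi=\Tr \Qb G$ for (ii). Introducing $\mathcal{L}_n^{(\alpha)}:=\mathcal{L}_n-\xi_\alpha \mathbf{y}_\alpha\mathbf{y}_\alpha^T$ and its resolvent $G^{(\alpha)}:=(\mathcal{L}_n^{(\alpha)}-zI_n)^{-1}$, the quantity $\Tr AG^{(\alpha)}$ with $A\in\{I_n,\Qb\}$ is unchanged by swapping $\E_{\alpha-1}$ for $\E_\alpha$ because $G^{(\alpha)}$ is independent of $(\xi_\alpha,\mathbf{y}_\alpha)$; subtracting it from $\Tr AG$ therefore does not affect the increment. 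Sherman--Morrison then gives
$$
\Tr A(G-G^{(\alpha)})=-\frac{\xi_\alpha(\mathbf{y}_\alpha,G^{(\alpha)}AG^{(\alpha)}\mathbf{y}_\alpha)}{1+\xi_\alpha(\mathbf{y}_\alpha,G^{(\alpha)}\mathbf{y}_\alpha)}.
$$

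To estimate the squared modulus of this expression I would split on the event $\Omega_\alpha := \{|\xi_\alpha|\|\mathbf{y}_\alpha\|^2\le \eta_0/2\}$. On $\Omega_\alpha$, the function $\|\mathbf{y}_\alpha\|^{-2}(\mathbf{y}_\alpha,G^{(\alpha)}\mathbf{y}_\alpha)$ is the Stieltjes transform of a probability measure, so (\ref{bound}) applied with $\xi\mapsto\xi_\alpha\|\mathbf{y}_\alpha\|^2$ gives $|1+\xi_\alpha(\mathbf{y}_\alpha,G^{(\alpha)}\mathbf{y}_\alpha)|^{-1}\le 2$; combined with $\|G^{(\alpha)}\|_{op}\le\eta_0^{-1}$, the numerator is controlled by $|\xi_\alpha|\|A\|_{op}\|\mathbf{y}_\alpha\|^2/\eta_0^2$. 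On $\Omega_\alpha^c$, I would instead use that $G-G^{(\alpha)}$ is rank one with operator norm at most $2/\eta_0$, giving the deterministic bound $|\Tr A(G-G^{(\alpha)})|\le 2\|A\|_{op}/\eta_0$; the probability of this event is controlled by Markov's inequality and the independence of $\xi_\alpha$ from $\mathbf{y}_\alpha$, $\p(\Omega_\alpha^c)\le 4\E|\xi_\alpha|^2\E\|\mathbf{y}_\alpha\|^4/\eta_0^2$.

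Both contributions are dominated, up to absolute constants depending on $\eta_0$, by $\|A\|_{op}^2\,\E|\xi_\alpha|^2\,\E\|\mathbf{y}_\alpha\|^4$. Assumption 2 with $p=1$ yields $\E|\xi_\alpha|^2=O(n/m)$, while Assumption 1 applied with $D=I_n$ together with (\ref{Qal}) gives $\E\|\mathbf{y}_\alpha\|^4=(\Tr \Qa)^2+\V\|\mathbf{y}_\alpha\|^2=1+o(1)$. Summing the $m$ martingale increments,
$$
\V\,\Tr AG\lesssim m\cdot \|A\|_{op}^2\cdot \tfrac{n}{m}=n\|A\|_{op}^2,
$$
which upon $\|I_n\|_{op}=1$ and division by $n^2$ proves (\ref{TrG}), and upon substituting $\|\Qb\|_{op}=O(n^{-1})$ from (\ref{Qal}) proves (\ref{TrQG}).

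The main subtlety is the control of the random denominator $|1+\xi_\alpha(\mathbf{y}_\alpha,G^{(\alpha)}\mathbf{y}_\alpha)|^{-1}$ when $\xi_\alpha$ can take both signs: the usual sign-of-imaginary-part observation $\Im z\cdot\Im(\mathbf{y}_\alpha,G^{(\alpha)}\mathbf{y}_\alpha)\ge 0$ no longer keeps the denominator away from zero. Splitting on $\Omega_\alpha$, invoking the Stieltjes-transform bound (\ref{bound}) on the good event and the rank-one operator-norm bound on the bad one, is precisely the device needed to bypass this obstacle.
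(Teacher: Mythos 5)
Your overall scheme---the martingale-difference decomposition with respect to $\F_\alpha=\sigma((\xi_\beta,\mathbf{y}_\beta)_{\beta>\alpha})$, insertion of $\Tr A G^{(\alpha)}$ as a free centering since $(\E_{\alpha-1}-\E_\alpha)\Tr A G^{(\alpha)}=0$, and then a per-increment bound of order $\|A\|_{op}^2\,\E|\xi_\alpha|^2\,\E\|\mathbf{y}_\alpha\|^4$---is exactly the paper's, and your bookkeeping of the moments (in particular the observation that Assumption~2 with $p=1$ gives $\E|\xi_\alpha|^2=O(n/m)$, which is what makes the sum over $m$ increments come out to $O(n)$ even when $m/n\to\infty$) is correct and in fact a touch more explicit than what the paper writes.

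Where you diverge is the per-increment estimate: you invoke the Sherman--Morrison/rank-one perturbation identity
\[
\Tr A(G-G^{(\alpha)})=-\frac{\xi_\alpha(\mathbf{y}_\alpha,G^{(\alpha)}AG^{(\alpha)}\mathbf{y}_\alpha)}{1+\xi_\alpha(\mathbf{y}_\alpha,G^{(\alpha)}\mathbf{y}_\alpha)},
\]
and then spend the bulk of the argument controlling the random denominator by splitting on the event $\{|\xi_\alpha|\|\mathbf{y}_\alpha\|^2\le\eta_0/2\}$ and invoking the Stieltjes-transform inequality (\ref{bound}) on the good event and a rank-one operator-norm bound on the bad one. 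This works, but it is unnecessary machinery here: the paper instead uses the plain resolvent identity $G-G^{(\alpha)}=-\xi_\alpha G\mathbf{y}_\alpha\mathbf{y}_\alpha^T G^{(\alpha)}$, so that $\Tr A(G-G^{(\alpha)})=-\xi_\alpha(G^{(\alpha)}AG\,\mathbf{y}_\alpha,\mathbf{y}_\alpha)$ with \emph{no} denominator at all, and the deterministic bound $|\Tr A(G-G^{(\alpha)})|\le|\xi_\alpha|\,\|A\|_{op}\|\mathbf{y}_\alpha\|^2/\eta_0^2$ is immediate. The ``main subtlety'' you flag at the end---keeping $|1+\xi_\alpha(\mathbf{y}_\alpha,G^{(\alpha)}\mathbf{y}_\alpha)|$ away from zero when $\xi_\alpha$ changes sign---is real and is indeed handled carefully (via (\ref{bound})) in the later Lemma \ref{l:mean}, but it is avoidable here precisely because the resolvent identity produces the difference of resolvents in undivided form. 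Your argument is correct, just longer than it needs to be for this particular lemma.
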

 \begin{proof} Our proof is based on a  standard martingale technique introduced in random matrix theory by Girko (see \cite{Dh-Co:68} and \cite{Pa-Sh:11}).  For every $1\leq \al\leq m$, introduce
\begin{align*}
\mathcal{L}_n   ^\al=\mathcal{L}_n   -\xi_\al \ya  \ya ^T \quad\text{and}\quad   G^\al(z)=(\mathcal{L}_n   ^\al-zI_n)^{-1},
\end{align*}
so that $\mathcal{L}_n   ^{\al}$, $G_n^{\al}$ do not depend on $\xi_{\al}$ and $\ya $.
 Applying the result of \cite{Dh-Co:68},  one can get
  \begin{align*}
   \V \,n^{-1}\Tr G(z) &\le \frac{1}{n^2} \sum_{\al}\E\big|\Tr (G -\E_{\al}G )\big|^2
    \le\frac{4}{n^2} \sum_{\al}\E\big|\Tr (G -G^{\al} )\big|^2,
        \end{align*}
     (see also Lemma 3.2 \cite{Ly:17}), where by the resolvent identity
     $$
     \E\big|\Tr (G -G^{\al} )\big|^2=\E\big|\xi_\al(G^{\al}G\ya,\ya)\big|^2\le\E|\xi_\al|^2\E\|\ya\|_2^4/\eta_0^4,
     $$
  and (\ref{TrG}) follows.  Here we also used that as it follows from Assumptions 1,2, $\E|\xi_\al|^2$ and $\E\|\ya\|_2^4$ are bounded. Similarly we have
   \begin{align}\label{Tr2}
   \V\Tr \Qb G(z) &\le 4 \sum_{\al}\E\big|\Tr \Qb (G -G^{\al} )\big|^2\le 4 \sum_{\al}\E|\xi_\al|^2\E\|\ya\|_2^4\|\Qb\|^2_{op} /\eta_0^4,
        \end{align}
  and by (\ref{Qal}) we get (\ref{TrQG}).
  \end{proof}
  \medskip

   As it follows from Lemma \ref{l:var} (i), it remains to show that for every $z\in \C\setminus\R$ the expectations of $s_n$ converge to $f$ which solves (\ref{eq:I}). Then since $f$ is the Stieltjes transform of a non-negative measure (which is in fact a probability measure due to the tightness of $\E\NN_{\mathcal{L}_n   }$), Lemma \ref{l:solv} finishes the proof of  Theorem \ref{t:1}.
  \begin{lemma}(Convergence in mean.)\label{l:mean}
    Let  $f_n:=\E s_n=n^{-1}\E\Tr G$. Then for every $z\in \C\setminus\R$ there exists $\lim_{n\to\infty}f_n(z)=:f(z)$, and $f$ satisfies (\ref{eq:I}).
  \end{lemma}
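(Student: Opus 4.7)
The plan is to derive a Schwinger--Dyson-type identity for $f_n$ by rank-one updates and concentration, and then to close it into (\ref{eq:I}) using the averaging condition (\ref{Q}). Setting $G^\alpha := (\mathcal{L}_n - \xi_\alpha \ya\ya^T - zI_n)^{-1}$, the Sherman--Morrison identity $\xi_\alpha(G\ya,\ya) = \xi_\alpha(G^\alpha\ya,\ya)/(1+\xi_\alpha(G^\alpha\ya,\ya))$ combined with $\Tr \mathcal{L}_n G = n + z n\,s_n$ gives
\[
1 + z s_n = \frac{1}{n}\sum_\alpha \frac{\xi_\alpha(G^\alpha\ya,\ya)}{1+\xi_\alpha(G^\alpha\ya,\ya)}.
\]
Since $G^\alpha$ is independent of both $\xi_\alpha$ and $\ya$, I would apply Assumption 1 to replace $(G^\alpha\ya,\ya)$ by $\Tr \Qa G^\alpha$, then Lemma \ref{l:var}(ii) and a rank-one perturbation bound to replace $\Tr\Qa G^\alpha$ by $\overline{\tau}_\alpha := \E\Tr\Qa G$, and finally integrate $\xi_\alpha$ against $\sigma_n$. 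The denominators are controlled by $|1+\xi t|^{-1}\le\max(2,4|\xi|/\eta_0)$ from Lemma \ref{l:solv}, valid since both $(G^\alpha\ya,\ya)$ and $\overline{\tau}_\alpha$ are Stieltjes transforms of non-negative measures. Taking expectations yields
\[
1 + z f_n = \frac{1}{n}\sum_\alpha \int \frac{\xi\,\overline{\tau}_\alpha}{1+\xi\,\overline{\tau}_\alpha}\,d\sigma_n(\xi) + o(1). \qquad (\star)
\]

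The principal difficulty is that although (\ref{Q}) gives $\frac{1}{m}\sum_\alpha \overline{\tau}_\alpha = f_n + \E\Tr B_n G = f_n + o(1)$, individual $\overline{\tau}_\alpha$'s may deviate from $f_n$ on an $O(1)$ scale while the integrand in $(\star)$ is nonlinear. To overcome this I would repeat the derivation with the ``test matrix'' $I_n$ replaced by $\Qb$ for each $\beta\le m$: starting from $\Tr\Qb + z\overline{\tau}_\beta = \sum_\alpha \xi_\alpha(G\Qb\ya,\ya)$ and running through the same reductions produces
\[
\Tr\Qb + z \overline{\tau}_\beta = \sum_\alpha \int \frac{\xi\,\rho_{\alpha\beta}}{1+\xi\,\overline{\tau}_\alpha}\,d\sigma_n(\xi) + o(1),\qquad \rho_{\alpha\beta} := \E \Tr(\Qb\Qa G^\alpha).
\]
Using $\|\Qa\|_{op}=O(n^{-1})$ together with $\|\Qb B_n\|_{HS}\le \|\Qb\|_{op}\|B_n\|_{HS} = o(n^{-3/2})$, one obtains $\sum_\alpha \rho_{\alpha\beta} = m\,\E\Tr(\Qb\Qn G) + o(m/n) = (m/n)\overline{\tau}_\beta + o(m/n)$, while individually $\rho_{\alpha\beta} = O(n^{-1})$. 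Splitting $\rho_{\alpha\beta} = \overline{\tau}_\beta/n + r_{\alpha\beta}$ the identity reduces to
\[
\overline{\tau}_\beta\bigl((m/n) J_n - z\bigr) = 1 - R_\beta + o(1),\qquad J_n := \frac{1}{m}\sum_\alpha \int \frac{\xi\,d\sigma_n(\xi)}{1+\xi\,\overline{\tau}_\alpha},
\]
with $R_\beta := \sum_\alpha r_{\alpha\beta}\int \xi\,d\sigma_n(\xi)/(1+\xi\,\overline{\tau}_\alpha)$. Since each $\Qa$ is positive semidefinite and $\Im G\ge 0$, we have $\Im\overline{\tau}_\alpha\ge 0$, so $\Im((m/n) J_n - z)\le -\eta_0$ and the denominator is bounded below. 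A Cauchy--Schwarz estimate on $R_\beta$ exploiting both $\sum_\alpha r_{\alpha\beta} = o(m/n)$ and $|r_{\alpha\beta}|=O(n^{-1})$ yields $|R_\beta|\le C\delta_n + o(1)$ uniformly in $\beta$, where $\delta_n := \max_\alpha|\overline{\tau}_\alpha - f_n|$; for $\eta_0$ sufficiently large this bootstrap closes and gives $\delta_n = o(1)$.

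With the uniform estimate $\overline{\tau}_\alpha = f_n + o(1)$ in hand, the right-hand side of $(\star)$ collapses to $(m/n)\int \xi f_n\,d\sigma_n(\xi)/(1+\xi f_n) + o(1) = \int f_n\,d\ws_n(\xi)/(1+\xi f_n) + o(1)$. Extracting a convergent subsequence $f_{n_k}\to f$ via Montel's theorem on $\CC$ (where $|f_n|\le 1/\eta_0$) and using the weak convergence $\ws_n\to\ws$ of Assumption 2, the limit satisfies $1 + zf = f\int d\ws(\xi)/(1+\xi f)$, which is (\ref{eq:I}). Lemma \ref{l:solv} identifies $f$ uniquely as the Stieltjes transform of a probability measure, promoting subsequential to full convergence $f_n\to f$ on $\CC$; analyticity then extends this to all of $\C\setminus\R$. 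The trickiest single step will be the bootstrap producing $\delta_n = o(1)$, since the Cauchy--Schwarz control of $R_\beta$ closes only for $\eta_0$ large and must then be propagated back to the full spectral parameter range via the uniqueness of (\ref{eq:I}).
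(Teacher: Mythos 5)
Your proposal follows the same Sherman--Morrison/companion-equation scheme as the paper's proof of Lemma~\ref{l:mean}: the same starting identity for $zf_n+1$, the same replacement of $(G^\al\ya,\ya)$ by $\overline{\tau}_\al=\E\Tr\Qa G$ (the paper's $f_{n,\al}$) via Assumption~1 and Lemma~\ref{l:var}(ii), and the same bootstrap proving $\delta_n=\max_\al|\overline{\tau}_\al-f_n|=o(1)$ by deriving a companion equation with test matrix $\Qb$, subtracting, and closing for $\eta_0$ large. Your observation that $\Im \overline{\tau}_\al\geq 0$ forces $\Im ((m/n)J_n-z)\leq -\eta_0$ is a clean way to lower-bound the denominator; the paper achieves the same effect cruder, just by taking $\eta_0$ large enough in the step after~(\ref{rna}).

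The one step that needs repair is the claim that ``a Cauchy--Schwarz estimate'' gives $|R_\beta|\leq C\delta_n+o(1)$. Cauchy--Schwarz applied directly to $R_\beta=\sum_\al r_{\al\beta}\int \xi\,d\sigma_n(\xi)/(1+\xi\overline{\tau}_\al)$ produces only an $O(1)$ bound with no $\delta_n$ factor, so the bootstrap would not close. What is actually needed is the decomposition
$$
\frac{\xi}{1+\xi\overline{\tau}_\al}=\frac{\xi}{1+\xi f_n}+\frac{\xi^2(f_n-\overline{\tau}_\al)}{(1+\xi\overline{\tau}_\al)(1+\xi f_n)},
$$
which is how the paper's $R'_{n,\al}$ arises. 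The first piece, after pulling out $\sum_\al r_{\al\beta}=o(m/n)$, contributes $o(m/n)\cdot\int \xi\,d\sigma_n(\xi)/(1+\xi f_n)=o(m/n)\cdot O(n/m)=o(1)$; here it is essential to estimate $\int \xi\,d\sigma_n/(1+\xi f_n)=\frac{n}{m}\big(c_1-f_n\int \xi\,d\ws_n/(1+\xi f_n)\big)=O(n/m)$ rather than by $\E|\xi_n|$, which can be of order $\sqrt{n/m}$ and would not yield $o(1)$ when $m/n\to\infty$. The second piece contributes at most $\delta_n\sum_\al|r_{\al\beta}|\int \xi^2\,d\sigma_n(\xi)/|1+\xi\overline{\tau}_\al||1+\xi f_n|$, which the moment conditions of Assumption~2 (for $p\le 3$) bound by $C\delta_n$ with $C$ uniformly bounded in $\eta_0$. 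With this decomposition in place your bootstrap closes exactly as the paper's does, and the rest of your argument (Montel compactness, weak convergence of $\ws_n$, uniqueness via Lemma~\ref{l:solv}) is as in the paper.
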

  \begin{proof}
    Since $|f_n(z)|\le|\Im z|^{-1}$, there is a subsequence $(f_{n_j})_{n_j}$ and an analytic function $f(z)$, $z\in \C\setminus\R$, such that  $(f_{n_j})_{n_j}$ converges to $f$ uniformly on every compact set in  $\C\setminus\R$.  Due to the uniqueness property of analytic functions it suﬃces to consider domain $\C_{\eta_0}$ (\ref{C0})
for some fixed $\eta_0>0 $ which will be chosen later, and to show that every convergent subsequence converges in $\C_{\eta_0}$ to a solution $f$ of (\ref{eq:I}).
\medskip

Saving notation $(f_n)_n$ for a convergent subsequence, applying the resolvent identity, $zG=-1+G\mathcal{L}_n$, and a rank-one perturbation formula
\begin{equation}\label{rankone}
  G-G^{\al}=-\frac{\xi_\al G^{\al}\ya \ya ^T G^{\al}}{1+\xi_{\al}(G^{\al}\ya ,\ya )},
\end{equation}
we get
\begin{align}\label{fn1}
zf_n+1&=\frac{1}{n}\sum_\al \E\xi_{\al}(G\ya ,\ya )\notag
=\frac{1}{n}\sum_\al \E\frac{\xi_{\al}(G^{\al}\ya ,\ya )}{1+\xi_{\al}(G^{\al}\ya ,\ya )}
\\
&=\int\frac{1}{m}\sum_\al \E\frac{(G^{\al}\ya ,\ya )}{1+\xi(G^{\al}\ya ,\ya )}d\ws_n(\xi).
\end{align}
By the conditions of the theorem we have
\begin{align}
  &\E (G^{\al}\ya ,\ya )=\E\Tr \Qa G^\al,\notag
  \\
  &\V_\al(G^{\al}\ya ,\ya )=o(1),\quad n\to\infty,\label{VGyy}
\end{align}
where in the first equality we can replace $G^\al$ with $G$. Indeed, by the resolvent identity and (\ref{Qal}) we have
\begin{align}\label{QaGa-G}
 |\E \Tr\, \Qa (G^\al-G)|= |\E\xi_\al (G \Qa  G^{\al}\ya,\ya)|\le \|\Qa \|_{op}|\xi|\Tr \Qa/\eta_0^{2}=O(n^{-1}),
\end{align}
(more precisely, $\int  |\E \Tr\, \Qa (G^\al-G)d\ws_n(\xi)|=o(1)$),  and we also used that
$\E\|\ya\|^2=\Tr \Qa.$ Hence, introducing
$$
\fa:=\E\Tr G \Qa ,
$$
 and applying (\ref{Q}) we get
 \begin{align*}
  &\E (G^{\al}\ya ,\ya )=\fa+O(n^{-1})\quad\text{and}\quad f_n=\frac{1}{m}\sum_\al \fa +o(1).
  \end{align*}
Using the above equalities we get
\begin{align*}
  \frac{1}{1+\xi(G^{\al}\ya ,\ya )}&=\frac{1}{1+\xi f_n}\Bigg(1+\frac{\xi(f_n-f_{n,\al})}{1+\xi(G^{\al}\ya ,\ya )}
  -\frac{\xi (G^{\al}\ya ,\ya )^\circ}{1+\xi(G^{\al}\ya ,\ya )}\Bigg)+o(1),
\end{align*}
where $x^\circ=x-\E x$. This and (\ref{fn1}) yield
\begin{align}\label{fn2}
zf_n+1&=\int\frac{f_n}{1+\xi f_n}d\ws_n(\xi)+R_n+R'_n+o(1),
\\
R_n&=\int\frac{1}{1+\xi f_n}\frac{1}{m}\sum_\al \E\frac{(G^{\al}\ya ,\ya )^\circ}{1+\xi(G^{\al}\ya ,\ya )}d\ws_n(\xi),\notag
\\
R'_n&=\int\frac{1}{1+\xi f_n}\frac{1}{m}\sum_\al \E\frac{(\fa-f_{n})}{1+\xi(G^{\al}\ya ,\ya )}d\ws_n(\xi).\notag
\end{align}
By the Schwartz inequality  $ \E|(G^{\al}\ya ,\ya )^\circ|\le (\V (G^{\al}\ya ,\ya ))^{1/2}$, where by (\ref{TrQG}) and (\ref{VGyy})
\begin{align}\label{VGyy}
\V (G^{\al}\ya ,\ya )=\E\V_{\al}(G^{\al}\ya ,\ya )+\V \Tr G^{\al} \Qa=o(1).
\end{align}
Note that by (\ref{bound}),
$$
|1+\xi f_n(z)|^{-1}\le\max\{2, \, 4|\xi|/|\Im z|\}.
$$
 Also it follows from (\ref{rankone}) that
$(1+\xi_{\al}(G^{\al}\ya ,\ya ))^{-1}=1-\xi_{\al}(G\ya ,\ya )$, hence,
\begin{equation*}
  \frac{1}{|1+\xi(G^{\al}\ya ,\ya )|}\le 1+|\xi|\|\ya\|^2_2/\eta_0.
\end{equation*}
This and Assumption 2 allow to get
\begin{align}
|R_n|&={o}(1)\quad\text{and}\notag
\\
|R'_n| &\leq \frac{C}{n}\sum\limits_\am  |\fa -f_n|\leq C\Delta_n(z),\quad\text{where}\quad
\Delta_n(z)=\max\limits_{\al} |f_{n,\al}(z)-f_n(z)|\label{rn=}
\end{align}
and $C >0$ depends only on $\eta_0$. To finish the proof it remains to show that $\Delta_n=o(1)$.
Repeating all the steps leading to (\ref{fn2}) -- (\ref{rn=}) one can get
\begin{align}\label{fn3}
z\fa+\Tr \Qa&=\sum_\al \E\frac{\xi_{\al}(\Qa G^{\al}\ya ,\ya )}{1+\xi_{\al}(G^{\al}\ya ,\ya )}
\\
&=\int\frac{n}{m}\sum_\al \E\frac{(\Qa G^{\al}\ya ,\ya )}{1+\xi(G^{\al}\ya ,\ya )}d\ws_n(\xi)\notag
\\
&=\int\frac{\fa}{1+\xi f_n}d\ws_n(\xi)+R_{n,\al}+R'_{n,\al}+o(1),\notag
\end{align}
where
\begin{align}
R_{n,\al}&=-\int\frac{1}{1+\xi f_n}\frac{n}{m}\sum_\be \E\frac{\xi(\Qa G^{\be}\yb ,\yb )(G^{\be}\yb ,\yb )^\circ}{1+\xi(G^{\be}\yb ,\yb )}d\ws_n(\xi)=o(1),\label{Rna}
\\
R'_{n,\al}&=-\int\frac{1}{1+\xi f_n}\frac{n}{m}\sum_\be \E\frac{\xi(\Qa G^{\be}\yb ,\yb )(f_{n,\be}-f_{n})}{1+\xi(G^{\be}\yb ,\yb )}d\ws_n(\xi),
\quad |R'_{n,\al}| \leq C\Delta_n(z),\label{rna}
\end{align}
and we used additionally that by (\ref{Qal}) $\|\Qa\|_{op}=O(n^{-1})$.
It follows from (\ref{fn2}) and (\ref{fn3}) that
\begin{align*}
z(\fa - f_n)+(1-\Tr\Qa)= (\fa-f_n)\int \frac{d\ws_n(\xi)}{1+\xi f_n}+R'_{n,\al}- R'_{n}+o(1),
\end{align*}
hence, using bounds for $R'_{n,\al}$ and $R'_{n}$ and also (\ref{Qal}) we get
\begin{align*}
\Big|z-\int \frac{ d\ws_n(\xi)}{1+\xi f_n}\Big||\fa - f_n|\le C\Delta_n(z)+{o}(1),
\end{align*}
where $C>0$ is uniformly bounded in $\eta_0$. Choosing $\eta_0$ big enough one can get
$$
\Big|z-\int \frac{ d\ws_n(\xi)}{1+\xi f_n}\Big|>2C,
$$
which implies $2C|\fa - f_n|\le C\Delta_n(z)+{o}(1)$. Taking the maximum
over $\al\le m$ we get
\begin{equation}\label{Del}
  \Delta_n(z)={o}(1),\quad n\to \infty.
\end{equation}
 This leads to $R'_n=o(1)$ as $n\to \infty$ and finishes the proofs of the lemma and of the theorem.
\end{proof}

  \section{Proof of Theorem \ref{t:2}}
\label{s:t2}

Given $r,d\in\N$, let $\LL$ be defined in (\ref{AL}) - (\ref{B}) and (\ref{LY}):
\begin{align*}
  &\LL=\sum_{1\le k < \ell\le r}\xi_{k\ell} Y^{k\ell}Y^{k\ell\,T},\quad
  Y^{k\ell}=(Y_j)_{j=1}^r=((\delta_{jk}-\delta_{j\ell})\vv^{kl})_{j=1}^{r}\in \R^{rd},
  \end{align*}
where $(\xi_{k\ell})_{1\le k<l\le r}$ are  iid  copies of a $0/1$ random variable $\xi=\xi_r$ with $\p(\xi=1)=p_r$, and $(\vv^{kl})_{1\le k<l\le r}$ are mutually independent normalized isotropic random vectors, $\E\vv^{kl}_\al\vv^{kl}_\beta=d^{-1}\delta_{\al\beta}$,  satisfying Assumption 1 and having norms uniformly bounded in $r$,
$$
\sup_{k,\ell}\|\vv^{k\ell}\|^2_2\le C_0
$$
for some $C_0>0$. Here for block vectors of the form $X=(X_j)_{j=1}^r=(X_{j\al})_{j,\al=1}^{r,d}$ we use Latin indexes to count blocks and Greek indexes to count entries within a block. Let
$$
Q^{k\ell}=\Big(\Qk_{i\gamma,j \be}\Big)_{i,j,\gamma,\be=1}^{r,d}:=\E Y^{k\ell}Y^{k\ell T}
=\Big(\E Y^{k\ell}_{i} Y^{k\ell}_{j}\Big)_{i,j=1}^{r}.
$$
By the definition of $Y^{k\ell}$,
\begin{align*}
    &Q^{k\ell}  =\frac{1}{d}\Big((\delta_{jk}-\delta_{j\ell})(\delta_{ik}-\delta_{i\ell})I_d\Big)_{i,j=1}^{r},
\end{align*}
so it has only four non-zero blocks (equal $d^{-1}I_d$).
To check the conditions of Theorem \ref{t:1} note first that now
  $$
  m=r(r-1)/2,\quad n=rd,\quad\text{so that}\quad \lim_{n\to\infty}\frac{m}{n}\E\xi=\lim_{r\to\infty}\frac{p_r r}{2d}=c /2=c_1.
  $$
   For any $rd\times rd$ block matrix $D=\big(D_{ij}\big)_{i,j=1}^r$ with $d\times d$ blocks $D_{ij}=(D_{i\al,j\be})_{\al,\be=1}^d$ we have
  $$
  \big(DY^{k\ell},Y^{k\ell}\big)=\big(\widetilde{D}\vv^{k\ell},\vv^{k\ell}\big),\quad\text{where}\quad
  \widetilde{D}=D_{kk}+D_{\ell\ell}-D_{k\ell}-D_{\ell k},
  $$
  so that (\ref{Ayy2}) for $Y^{k\ell}$ follows from (\ref{Ayy2}) for $\vv^{k\ell}$. Also
  it is easy to check that $\Tr Q^{k\ell}=2$ and
  $$
  Q_{rd}:=\frac{2}{r(r-1)}\sum_{1\le k<\ell\le r} Q^{k\ell}=\frac{2}{rd}{ I}_{rd}+B_{rd},\quad\text{where}\quad
  B_{rd}=-\frac{2}{r(r-1)d}\Big((1-\delta_{ij}){ I}_d\Big)_{i,j=1}^r,
  $$
  and $\|B_{rd}\|_{HS}=o((rd)^{-1/2})$, $r\to\infty$, thus (\ref{Q}) is fulfilled. The only condition of Theorem \ref{t:1} which is not fulfilled is the first part of (\ref{Qal}), namely, we have $\|Q^{k\ell}\|_{op}=O(d^{-1})$ (instead of $\|Q^{k\ell}\|_{op}=O((rd)^{-1})$). On the other hand, matrix $Q^{k\ell}$ is very sparse and has only four non-zero blocks.

  Hence we need to go through the proof of Theorem \ref{t:1} and check the places, where condition (\ref{Qal}) was used. There are three such places: Lemma \ref{l:var} (ii), (\ref{QaGa-G}), and (\ref{Rna}) -- (\ref{rna}).  As to Lemma \ref{l:var} (ii), we reprove it in Lemma \ref{l:var2} below. Now we recall the main steps of the proof of Lemma \ref{l:mean} and check (\ref{QaGa-G}) and (\ref{Rna}) -- (\ref{rna}).

  Similar to (\ref{fn1}), one can get
  $$
  zf_r+1=
  c_1-\frac{c_1}{r(r-1)/2}\sum_{1\le k<\ell\le r}\E\frac{1}{\Ak},\quad\text{where}\quad\Ak=1+(\Gk\Yk,\Yk),
  $$
  $\Gk=(\LL-\xi_{k\ell}\Yk Y^{k\ell T}-zI_{rd})^{-1}$. It is easy to show that
  \begin{equation*}
    |\Ak|^{-1},\,|\E\Ak|^{-1}\le 1/(1-2C_0/\eta_0).
  \end{equation*}
  We also have $\E(\Gk\Yk,\Yk)=\E\Tr \Qk\Gk$. Similar to (\ref{QaGa-G}), here we can replace $\Gk$ with $G$. Indeed, since  by the definition of $Q^{k\ell}$,
  \begin{equation}\label{QXY}
  (\Qk X,Y)=\frac{1}{d}\sum_{\gamma}(X_{k\gamma}-X_{\ell\gamma})(Y_{k\gamma}-Y_{\ell\gamma}),\quad \forall X,Y\in \R^{rd},
  \end{equation}
    we have
        \begin{align*}
     |\E\Tr \Qk(\Gk-G)|&=|\E\xi_{k\ell}( \Qk\Gk\Yk,\overline{G}\Yk)|
     \\
     &=|\E\xi_{k\ell}\frac{1}{d}\sum_{\ga}((\Gk\Yk)_{k\ga}-(\Gk\Yk)_{\ell\ga})((\overline{G}\Yk)_{k\ga}-(\overline{G}\Yk)_{\ell\ga})|
     \\
     &\le\frac{4}{d\eta_0^2}\E\|\Yk\|^2_2=O(d^{-1}).
    \end{align*}
    Thus
    $$
    \E(\Gk\Yk,\Yk)=f_{r,k\ell}+O(d^{-1}), \quad f_{r,k\ell}=\E\Tr \Qk G,
    $$
    and repeating the steps leading to (\ref{fn1}) -- (\ref{rna}), we get
        $$
    zf_r+1=\frac{2c_1f_r}{1+2f_r}+R'_r+o(1), \quad |R'_r|\le C\Delta_r(z),\quad \Delta_r=\max_{k,\ell}|f_{r,k\ell}-2f_r|,
    $$
    and
    \begin{align*}
       zf_{r,k\ell}+2&=\frac{2c_1f_{r,k\ell}}{1+2f_r}+R_{r,k\ell}+R'_{r,k\ell}+o(1),
       \\
       R_{r,k\ell}&=\frac{2c_1}{1+2f_r}\frac{d}{r}\sum_{1\le i<j\le r}\E\frac{(\Qk\Gi\Yi,\Yi)\Ai^\circ}{\Ai}
       \\
       R'_{r,k\ell}&=\frac{2c_1}{1+2f_r}\frac{d}{r}\sum_{1\le i<j\le r}\E\frac{(\Qk\Gi\Yi,\Yi)}{\Ai}(f_{r,ij}-2f_r).
    \end{align*}
   It follows from (\ref{QXY}) that for any $X\in \R^{rd}$
  $$
  (\Qk X,Y^{ij})=[\delta_{ik}+\delta_{i\ell}-\delta_{jk}-\delta_{j\ell}]\frac{1}{d}\sum_{\gamma}(X_{k\gamma}-X_{\ell\gamma})\vv_{\gamma}^{ij}.
  $$
Hence instead of the double sums over $i,j$ in the expressions above we have  single sums over $i$ or over $j$. This and the boundedness of
$\vv^{ij}$ and $\Ai$ allows to treat $R_{r,k\ell}$ and $R'_{r,k\ell}$ similar to (\ref{Rna}) -- (\ref{rna}) and then to show that $R_{n,\al}$, $R'_{n,\al}=o(1)$ and to get the equation for $f_{\mathcal{L}}=\lim f_r$ (see (\ref{fAfL})).
\medskip

It remains to prove
\begin{lemma}\label{l:var2}
   $\V \Tr \Qk G=o(1)$, $r\to\infty$.
\end{lemma}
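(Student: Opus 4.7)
The plan is to adapt the martingale bound used in Lemma~\ref{l:var}(ii),
\[\V \Tr \Qk G \le 4\sum_{i<j}\E|\Tr \Qk(G-G^{ij})|^2,\]
but to replace the crude operator-norm estimate on $\Qk$ (which would yield only $O(r/d)$ and is insufficient when $r/d\to\infty$) by one that exploits the rank-$d$ sparsity $\Qk=d^{-1}(e_k-e_\ell)(e_k-e_\ell)^T\otimes I_d$. Applying the rank-one formula (\ref{rankone}) and bounding $|1+\xi_{ij}(G^{ij}Y^{ij},Y^{ij})|^{-1}$ as in the main text, each summand becomes at most $C\xi_{ij}d^{-2}\|W_{ij}\|_2^4$ with the key factorization
\[W_{ij}:=(G^{ij}Y^{ij})_k-(G^{ij}Y^{ij})_\ell = A_{ij}\vv^{ij},\qquad A_{ij}:=G^{ij}_{ki}-G^{ij}_{kj}-G^{ij}_{\ell i}+G^{ij}_{\ell j}\in\C^{d\times d},\]
where $A_{ij}$ is measurable with respect to $G^{ij}$ and hence independent of $\vv^{ij}$.

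Conditioning on $G^{ij}$ and using the isotropy of $\vv^{ij}$ together with Assumption~1 in the form $\V((D\vv^{ij},\vv^{ij}))\le\|D\|_{op}^2\,o(1)$, one obtains
\[\E[\|A_{ij}\vv^{ij}\|_2^4\mid G^{ij}]\le d^{-2}\|A_{ij}\|_{HS}^4+o(1)\,\|A_{ij}\|_{op}^4.\]
Averaging over $\xi_{ij}$ (using $\xi_{ij}^2=\xi_{ij}$ and $\E\xi_{ij}=p_r$) reduces everything to deterministic sums, and gives
\[\V \Tr\Qk G \le Cp_rd^{-4}\sum_{ij}\E\|A_{ij}\|_{HS}^4 + o(1)\,p_rd^{-2}\sum_{ij}\E\|A_{ij}\|_{op}^4.\]

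The main deterministic input is the trace identity
\[\sum_{i'=1}^r\|G^{ij}_{ki'}\|_{HS}^2 = \Tr\bigl((G^{ij}(G^{ij})^*)_{kk}\bigr)\le d/\eta_0^2,\qquad\text{for each fixed }j,\]
which combined with $\|A_{ij}\|_{HS}^2\le 4(\|G^{ij}_{ki}\|_{HS}^2+\|G^{ij}_{kj}\|_{HS}^2+\|G^{ij}_{\ell i}\|_{HS}^2+\|G^{ij}_{\ell j}\|_{HS}^2)$ yields $\sum_{ij}\|A_{ij}\|_{HS}^2=O(rd)$. Using additionally the pointwise bounds $\|A_{ij}\|_{HS}^2\le d\|A_{ij}\|_{op}^2\le Cd$ and $\|A_{ij}\|_{op}^2\le\|A_{ij}\|_{HS}^2$, one obtains $\sum_{ij}\|A_{ij}\|_{HS}^4=O(rd^2)$ and $\sum_{ij}\|A_{ij}\|_{op}^4=O(rd)$. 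Invoking $p_rr/d\to c$ then gives
\[\V \Tr \Qk G \le Cp_rr/d^2 + o(1)\,p_rr/d = O(1/d)+o(1)=o(1),\]
as required. The subtle point, and the main obstacle, is exploiting the independence of $\vv^{ij}$ and $G^{ij}$ so that the concentration hypothesis of Assumption~1 can be invoked conditionally on $G^{ij}$; this is what compensates for the failure of the $O(n^{-1})$ operator-norm bound on $\Qk$ that was used in Lemma~\ref{l:var}(ii).
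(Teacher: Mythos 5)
Your proposal shares the martingale starting point with the paper but then takes a genuinely different route: you apply the rank-one formula (producing two factors of $G^{ij}$ and a bounded denominator), then condition on $G^{ij}$ and invoke Assumption~1 plus isotropy of $\vv^{ij}$ to estimate $\E[\|A_{ij}\vv^{ij}\|^4\mid G^{ij}]$. The paper instead uses the bare resolvent identity $G-G^{ij}=-\xi_{ij}G\Yi\Yi^{T}G^{ij}$ (no denominator), which leaves one factor of $G$ untouched; after a crude $O(1)$ bound on the $G^{ij}\Yi$ factor, the remaining sum collapses via
\[
\sum_{i<j}\xi_{ij}\,|(G\Yi)_{k\gamma}|^{2}=(\overline{G}\,\LL\, G)_{k\gamma,k\gamma}=(\overline{G}(zG+I))_{k\gamma,k\gamma}=O(1),
\]
a purely algebraic identity that requires no conditioning at all. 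Your route is more generic (it does not exploit that $\sum\xi_{ij}\Yi\Yi^T$ reassembles $\LL$), which is a point in its favour conceptually.

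However, there is a genuine gap at the ``trace identity'' step. The identity
\(\sum_{i'=1}^{r}\|G^{ij}_{ki'}\|_{HS}^{2}=\Tr\bigl((G^{ij}(G^{ij})^{*})_{kk}\bigr)\le d/\eta_0^{2}\)
holds for a \emph{fixed} pair $(i,j)$ with $i'$ a dummy index; it does not control $\sum_{i<j}\|G^{ij}_{ki}\|_{HS}^{2}$, because the matrix $G^{ij}$ changes with both $i$ and $j$. Applied term by term it only gives the trivial bound $\|G^{ij}_{ki}\|_{HS}^{2}\le d/\eta_0^{2}$, hence $\sum_{i<j}\|A_{ij}\|_{HS}^{2}=O(r^{2}d)$, which after your subsequent steps yields $\V\Tr\Qk G=O(r/d)\to\infty$, not $o(1)$. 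To repair this you would need to replace $G^{ij}$ by $G$ in each block, i.e.\ write $A_{ij}=\widetilde A_{ij}+(A_{ij}-\widetilde A_{ij})$ with $\widetilde A_{ij}:=G_{ki}-G_{kj}-G_{\ell i}+G_{\ell j}$, for which the trace identity (now with the \emph{fixed} matrix $G$) does give $\sum_{i<j}\|\widetilde A_{ij}\|_{HS}^{2}\le 16(r-1)d/\eta_0^{2}=O(rd)$, and then bound the error: by the rank-one formula the correction blocks satisfy $\|(G-G^{ij})_{ab}\|_{HS}\le C(\eta_0)\xi_{ij}$ with $a\in\{k,\ell\}$, $b\in\{i,j\}$, so
\(\sum_{i<j}\E\|A_{ij}-\widetilde A_{ij}\|_{HS}^{2}\le C(\eta_0)\sum_{i<j}\E\xi_{ij}=C(\eta_0)\binom{r}{2}p_r=O(rd).\)
With this repair your argument closes and recovers $\V\Tr\Qk G=O(1/d)+o(1)$. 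The repair is not merely a detail: as written the central deterministic estimate fails, and without it the approach does not beat the failure of the $O(n^{-1})$ operator-norm bound that the whole lemma is designed to circumvent.
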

\begin{proof}
 We have (see (\ref{Tr2}) and (\ref{QXY}))
   \begin{align*}
   \V \Tr \Qk G\le 4\sum_{i<j}\E|\Tr\Qk (G^{ij}-G)|^2=4\sum_{i<j}\E|\xi_{ij}(\Qk G^{ij}\Yi,\overline{G}\Yi)|^2
   \\
   =\frac{4}{d^2}\sum_{i<j}\E\Big|\xi_{ij}\sum_{\gamma}((G^{ij}\Yi)_{k\gamma}-(G^{ij}\Yi)_{\ell\gamma})
   ((\overline{G}\Yi)_{k\gamma}-(\overline{G}\Yi)_{\ell\gamma})\Big|^2,
      \end{align*}
 where
  \begin{align*}
   \frac{1}{d^2}\sum_{i<j}\E\Big|\xi_{ij}\sum_{\gamma}(G^{ij}\Yi)_{k\gamma}
   (\overline{G}\Yi)_{k\gamma}\Big|^2&\le
   \frac{1}{d^2}\sum_{i<j}\E\xi_{ij}^2\|G^{ij}\Yi\|^2_2\sum_{\gamma}
   |(\overline{G}\Yi)_{k\gamma}|^2
  \\
  &\le \frac{C_0}{\eta_0^2d^2}\E\sum_{\gamma}\sum_{i<j}\xi_{ij}
   |({G}\Yi)_{k\gamma}|^2
     \end{align*}
 and by the definition of $\LL$ and the resolvent identity,
 \begin{align*}
  \sum_{i<j}\xi_{ij}|({G}\Yi)_{k\gamma}|^2
   =\sum_{i<j}\xi_{ij}  (\overline{G}\Yi Y^{ijT}G)_{k\ga,k\ga}=
    (\overline{G}\LL G)_{k\ga,k\ga}
  =(\overline{G}(zG+I_{rd}))_{k\ga,k\ga}=O(1).
  \end{align*}
   This finishes the proof of the lemma.
 \end{proof}


\section{Adjacency matrices. Proof of Theorem \ref{t:A}}
\label{s:t4}

The scheme of the proof is essentially the same as in the case of Laplacian $\LL$. The main difference is that here for every vector $X^{k\ell}$ in the definition of $\A$ (see  (\ref{LY})) there are two terms containing this vector, $X^{k\ell}X^{\ell k T}$ and $X^{\ell k}X^{k \ell  T}$, so that in order to separate this vector from the rest we need to apply the rank one perturbation formula twice. Also it is convenient to consider first  a more general model without block structure. We have

\begin{theorem}\label{t:Ag}
Given $n,m\in\N$, consider an $n\times n$ matrix
$
\mathcal{A}_n =\sum_{\am}\xi_\al(\xa  \ya^T+\ya \xa^T),
$
where
\medskip

(i) ${(\xi_\al)}_\al$ are iid copies of a  0/1 random variable $\xi=\xi_n$ with $\p( \xi=1)=p_n$,\\

(ii) $\frac{m}{n}p_n \to c_1 >0$ as $n\to \infty$ (without loss of generality we assume that $\frac{m}{n}p_n \equiv c_1$),\\

(iii) $(\xa)_\al,\,(\ya)_\al \subset \R^n$ are two sets of mutually independent random vectors such that
$\|\xa\|_2^2\leq C_0$, $\|\ya\|^2_2\leq C_0$
for some $C_0>0$ and  for all deterministic matrices $D=D_n$ with $\|D\|_{op}=1$ we have (cf (\ref{Ayy2}))
    \begin{equation*}
   \sup_{\uu,\vv\in(\xa,\ya)_\al} \V(D\, \uu,\vv)={o}(1),\quad n\rightarrow\infty.
    \end{equation*}

(iv) matrices $Q^{x\al}:=E \xa\xa^T$, $Q^{y\al}:=E \ya\ya^T$, and $Q^{xy\al}:=E \xa\ya^T=Q^{yx\al T}$ 
have the operator norms of order $O(n^{-1})$ and
$$
\sup_{\Qa\in(Q^{x\al},Q^{y\al})_\al}|\Tr \Qa-1|= o(1), 
$$

(v) for every $n\times n$ matrices $K_1$, $K_2$ we have
$$\frac{1}{m}\sum_{\am}\Tr Q^{x\al}K_1\Tr Q^{y\al}K_2=\frac{1}{n}\Tr K_1\frac{1}{n}\Tr K_2,$$

(vi) matrix $Q^{xy}:=(\frac{1}{m}\sum_\am|Q_{ij}^{xy\al}|)_{i,j}$ satisfies $\|Q^{xy}\|_{HS}= o(n^{-1/2})$.\\

Then as $n\to\infty$   the empirical spectral distributions $\NN _{\mathcal{A}_n }$ converge in probability to a non-random probability measure $\NN _{\mathcal{A}}$  which  Stieltjes transform $f$ is uniquely determined by the equation
  \begin{equation}\label{eq:A}
    zf^3+(1-2c_1 )f^2-zf-1=0
  \end{equation}
in the class of Stieltjes transforms of non-negative measures.
\end{theorem}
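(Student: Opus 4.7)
The plan is to follow the Stieltjes transform scheme of Theorem \ref{t:1}, replacing the rank-one perturbation formula (\ref{rankone}) by its rank-two analogue. Set $G(z)=(\mathcal{A}_n-zI_n)^{-1}$, $s_n=n^{-1}\Tr G$, $f_n=\E s_n$, and for each $\al$ let $\mathcal{A}_n^{\al}=\mathcal{A}_n-\xi_\al(\xa\ya^T+\ya\xa^T)$ with resolvent $G^{\al}$. I will first establish self-averaging: the martingale-difference bound of Lemma \ref{l:var} carries over because $G-G^{\al}$ is now a rank-two perturbation, yielding a bounded number of bilinear forms $(G^{\al}\uu,\vv)$ with $\uu,\vv\in\{\xa,\ya\}$, each controlled by $C_0/\eta_0$ via (iii). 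This gives $\V s_n=o(1)$ and, analogously, $\sup_\al\V\Tr(QG)=o(1)$ for $Q\in\{Q^{x\al},Q^{y\al},Q^{xy\al}\}$, reducing the theorem to convergence of $f_n$ together with uniqueness of the limit.

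For the equation, the symmetry $G^T=G$ gives $(G\xa,\ya)=(G\ya,\xa)$, hence
\begin{equation*}
zf_n+1=\frac{2}{n}\sum_\al\E[\xi_\al(G\xa,\ya)].
\end{equation*}
The rank-two Sherman--Morrison--Woodbury identity applied to $U=[\xa\;\ya]$ gives, on the event $\{\xi_\al=1\}$,
\begin{equation*}
(G\xa,\ya)=\frac{v^{\al}+D^{\al}}{1+2v^{\al}+D^{\al}},\qquad D^{\al}=(v^{\al})^2-u^{\al}w^{\al},
\end{equation*}
where $u^{\al}=(G^{\al}\xa,\xa)$, $v^{\al}=(G^{\al}\xa,\ya)$, $w^{\al}=(G^{\al}\ya,\ya)$. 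Since $\xi_\al\in\{0,1\}$ one has $\xi_\al^k=\xi_\al$, and using the independence of $\xi_\al$ from $(G^{\al},\xa,\ya)$ collapses the formula to
\begin{equation*}
\E[\xi_\al(G\xa,\ya)]=p_n\,\E\Bigl[\frac{v^{\al}+D^{\al}}{1+2v^{\al}+D^{\al}}\Bigr].
\end{equation*}

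Next, self-averaging together with the rank-two analogue of (\ref{QaGa-G})---which uses $\|Q^{x\al}\|_{op},\|Q^{y\al}\|_{op}=O(n^{-1})$ from (iv)---replaces $u^{\al},v^{\al},w^{\al}$ inside the expectation by the deterministic traces $f_{n,\al}^{x}=\E\Tr(GQ^{x\al})$, $f_{n,\al}^{xy}=\E\Tr(GQ^{xy\al})$, $f_{n,\al}^{y}=\E\Tr(GQ^{y\al})$. Taking $K_2=I_n$ in (v) together with $\Tr Q^{y\al}=1+o(1)$ from (iv) yields the marginal identity $m^{-1}\sum_\al f_{n,\al}^{x}=f_n+o(1)$, and symmetrically for $y$; choosing $K_1=K_2=G$ inside the expectation in (v) yields the product identity $m^{-1}\sum_\al f_{n,\al}^{x}f_{n,\al}^{y}=f_n^2+o(1)$; and a Cauchy--Schwarz bound using $\|G\|_{HS}\le\sqrt{n}/\eta_0$ combined with (vi) produces $m^{-1}\sum_\al|f_{n,\al}^{xy}|=o(1)$. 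Mimicking the contraction argument that yields (\ref{Del}), I will also derive $\max_\al|f_{n,\al}^{x}-f_n|,\max_\al|f_{n,\al}^{y}-f_n|=o(1)$ by writing analogues of (\ref{fn3}) for $\E\Tr(Q^{x\al}G)$ and $\E\Tr(Q^{y\al}G)$. Summing in $\al$ then yields, along any convergent subsequence $f_{n_j}\to f$, the limit equation $zf+1=-2c_1 f^2/(1-f^2)$, which rearranges to (\ref{eq:A}). Uniqueness in the class of Stieltjes transforms of non-negative measures follows as in Lemma \ref{l:solv}: any such $f$ satisfies $\eta|f(i\eta)|\to 1$, which singles out one algebraic branch of the cubic.

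The main obstacle will be the passage from the random nonlinear function of $u^{\al},v^{\al},w^{\al}$ to its value at the deterministic limits in a way compatible with the averaged (rather than pointwise-in-$\al$) nature of condition (v). The denominator $(1+v^{\al})^2-u^{\al}w^{\al}$ can degenerate a priori, so one must work on $\CC$ with $\eta_0$ large enough to keep it uniformly bounded below in modulus; Taylor-expanding the reciprocal then produces monomials of the form $(f_{n,\al}^{x})^a(f_{n,\al}^{y})^b(f_{n,\al}^{xy})^c$ each of which must be averaged consistently using (v) and (vi). A secondary delicate point is the simultaneous $\max_\al$ control of $f_{n,\al}^{x},f_{n,\al}^{y},f_{n,\al}^{xy}$, which requires coupling three versions of (\ref{fn3}) into a single contraction estimate before an analogue of (\ref{Del}) can be invoked.
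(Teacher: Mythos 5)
Your plan is correct and takes essentially the same route as the paper's proof: the rank-two Sherman--Morrison--Woodbury formula (the paper's \eqref{ranktwo}, which agrees with your $\tfrac{v^\al+D^\al}{1+2v^\al+D^\al}$), martingale self-averaging, replacement of $u^\al,v^\al,w^\al$ by the deterministic traces $f_n^{x\al},f_n^{xy\al},f_n^{y\al}$, condition (v) for the product identity $m^{-1}\sum_\al f_n^{x\al}f_n^{y\al}=f_n^2+o(1)$, condition (vi) with $\|G\|_{HS}\le\sqrt n/\eta_0$ to kill the cross terms, the contraction argument for $\Delta_n$, and uniqueness via Lemma~\ref{l:solv}. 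The only point where your outline is slightly more involved than necessary is the anticipated Taylor expansion and $\max_\al$ control of $f_{n,\al}^{xy}$: the paper gets by with the averaged bound $m^{-1}\sum_\al|\Tr Q^{xy\al}G|=o(1)$ together with the uniform lower bound $|A_\al|\ge\tfrac12-3C_0/\eta_0^2$ for large $\eta_0$, so no series expansion or pointwise control of the cross-term is needed.
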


\begin{remark}\label{r:A}
  A more general case corresponding to $\xi_\al$ satisfying Assumption 2 contains more pure technical details and we do not treat it here, but we strongly believe that following essentially  the same scheme one can prove that in this case $f$ solves the equation
   \begin{equation*}
    zf=-1-2f^2\int\frac{\xi d\ws(\xi)}{1-\xi^2 f^2}.
  \end{equation*}
  \end{remark}

\begin{proof}
Following the scheme of the proof of Theorem \ref{t:1} note first that the proof of the analog of Lemma~\ref{l:solv} is trivial in this case and the proof of the analog of Lemma~\ref{l:var} is essentially the same. Thus we only need to prove the convergence in mean (cf~Lemma~\ref{l:mean}). To this end introduce
$$
\mathcal{A}_n ^{\al}:=\mathcal{A}_n -\xi_\al(\xa  \ya^T+\ya \xa^T)\quad\text{and}\quad G^\al (z):=(\mathcal{A}_n ^\al-zI_n)^{-1},
$$
where
$ z \in \C_{\eta_0}$ for a big enough $\eta_0$.
Given an $n\times n$ symmetric matrix $K$, applying twice (\ref{rankone}) we get
\begin{equation}\label{ranktwo}
(KG\xa,\ya)=\frac{(KG^\al\xa,\ya)(1+\xi_\al(G^\al\xa,\ya))-\xi_\al^2(KG^\al\ya,\ya)(G^\al\xa,\xa)}{(1+\xi_{\al}(G^{\al}\xa ,\ya ))^2-\xi_{\al}^2(G^{\al}\ya ,\ya )(G^{\al}\xa ,\xa )}.
\end{equation}
It follows from the resolvent identity, (ii) and (\ref{ranktwo}) with $K=I$, that
\begin{align}\label{fn_1}
zf_n(z)+1&=\frac{2}{n}\sum_\al \E\xi_{\al}(G\xa ,\ya )\notag\\
&=\frac{2c_1}{m}\sum_\al \E\frac{(G^\al\xa,\ya)(1+(G^\al\xa,\ya))-(G^\al\ya,\ya)(G^\al\xa,\xa)}{(1+(G^{\al}\xa ,\ya ))^2-(G^{\al}\ya ,\ya )(G^{\al}\xa ,\xa )}\notag
\\
&=2c_1-\frac{2c_1}{m}\sum_\al \E\frac{1+(G^\al\xa,\ya)}{A_\al}\notag\\
&=2c_1-\frac{2c_1}{m}\sum_\al \frac{1+\E\Tr Q^{xy\al}G^\al}{\E A_\al} +R_n,
\end{align}
where
\begin{align*}
A_\al&=(1+(G^{\al}\xa ,\ya ))^2-(G^{\al}\ya ,\ya )(G^{\al}\xa ,\xa ),
\\
R_n&=\frac{2c_1}{m}\sum_\al \frac{1}{\E A_\al} \E\frac{(1+(G^\al\xa,\ya))A_\al^{\circ}}{A_\al}.
\end{align*}
 Applying (iii) and an analog of Lemma \ref{l:var2} (ii), one can show that
 \begin{align*}
  &|A_\al|\geq \frac{1}{2}-3C_0/\eta_0^2>0,
  \\
  &\V A_\al={o}(1), \ \text{(see also (\ref{VGyy})) and}
  \\
  &\E A_\al=(1+\E \Tr Q^{xy\al}G^{\al})^2-\E\Tr Q^{x\al}G^\al\E\Tr Q^{y\al}G^\al + o(1),
 \end{align*}
where with the help of (iv) $G^\al$ can be replaced with $G$ with an error term of order $O(n^{-1})$ (cf (\ref{QaGa-G})) so that
$$
\E A_\al=(1+\E \Tr Q^{xy\al}G)^2-f_n^{x\al}f_n^{y\al}+ o(1),
$$
and we use  notations
$$
f_n^{x\al}=\E\Tr Q^{x\al}G, \quad f_n^{y\al}=\E\Tr Q^{y\al}G.
$$
By (iv) and (vi), $|\Tr Q^{xy\al}G|=O(1)$ and
\begin{equation}\label{xy}
  \frac{1}{m}\sum_\am|\Tr Q^{xy\al}G|\leq \sum_{i,j}Q_{ij}^{xy}|G_{ij}|\leq\|Q^{xy}\|_{HS}\|G\|_{HS}= o(1).
\end{equation}
Hence $R_n={o}(1)$ and
\begin{align*}
zf_n+1&=2c_1-\frac{2c_1}{m}\sum_\am \frac{1}{1-f_n^{x\al}f_n^{y\al}}+ o(1).
\end{align*}
It follows from (v) that ${m}^{-1}\sum_\am f_n^{x\al}f_n^{y\al}=f_n^2$, hence,
\begin{align}
zf_n+1=2c_1-\frac{2c_1}{1-f_n^2}+R'_n+ o(1),\label{zfn_1}
\end{align}
where
\begin{align}
&R'_n=-\frac{1}{1-f_n^2}\frac{2c_1}{m}\sum_\am\frac{f_n^{x\al}f_n^{y\al}-f_n^2}{1-f_n^{x\al}f_n^{y\al}},\label{rnad}
\\
&|R'_n|\leq\frac{2c_1}{\eta_0(1-\eta_0^{-2})^2}\Delta_n, \quad \Delta_n:=\max_\am(|f_n^{x\al}-f_n|+|f_n^{y\al}-f_n|).\notag
\end{align}
It remains to show that $\Delta_n= o(1)$. To this end we treat similarly $f_n^{x\al}$ (and  $f_n^{y\al}$) and applying (\ref{ranktwo}) with $K=Q^{x\al}$ we get
\begin{align*}
zf_n^{x\al}+\Tr Q^{x\al}&=\frac{2c_1}{m}\sum_\bm \E\frac{(nQ^{x\al}G^\be\xb,\yb)(1+(G^\be\xb,\yb))-(nQ^{x\al}G^\be\yb,\yb)(G^\be\xb,\xb)}{A_\be}.
\end{align*}
Note that $\|nQ^{x\al}\|_{op}=O(1)$, hence repeating steps leading to (\ref{zfn_1}) -- (\ref{rnad}) and applying (v), one can get
\begin{align}
zf_n^{x\al}+\Tr Q^{x\al}&=-\frac{2c_1}{m}\sum_\bm \frac{\E n\Tr Q^{x\al}Q^{y\be}G\E \Tr Q^{x\be}G}{1-f_n^{x\be}f_n^{y\be}}+ o(1)\notag\\
&=-2c_1\frac{f_n^{x\al}f_n}{1-f_n^2}+R'_{n,\al}+ o(1),\label{zfnxa}
\end{align}
where
\begin{align*}
&R'_{n,\al}=-\frac{1}{1-f_n^2}\frac{2c_1}{m}\sum_\bm\frac{f_n^{x\be}\E n\Tr Q^{x\al}Q^{y\be}G(f_n^{x\al}f_n^{y\al}-f_n^2)}{1-f_n^{x\be}f_n^{y\be}},
\\
&|R'_{n,\al}|\leq\frac{2c_1C_0^2}{(1-\eta_0^{-2})^2}\Delta_n.
\end{align*}
Now subtracting (\ref{zfnxa}) from (\ref{zfn_1}) and using (iv) one can show that $\Delta_n= o(1)$ (cf(\ref{Del})). Thus
\begin{equation*}
zf_n+1=-\frac{2c_1f_n^2}{1-f_n^2}+ o(1),
\end{equation*}
which leads to (\ref{eq:A}) and finishes the proof of Theorem \ref{t:Ag}.
\end{proof}
\medskip

{\it Proof of Theorem \ref{t:A}.} Now we have
\begin{equation}\label{A}
     \A=\sum_{1\le k \neq \ell\le r}\xi_{k\ell} X^{k\ell}X^{\ell k\,T},\quad X^{k\ell}=(\delta_{jk}\vv^{k\ell})_{j=1}^{n},
\end{equation}
so that in terms of Theorem \ref{t:2}
$$
\sum_{\al}=\sum_{1\le k<\ell\le r},\quad m=r(r-1)/2,\,\,n=rd,\,\, c_1=c/2,\quad \xa=X^{k\ell},\,\ya=X^{\ell k},\,k<\ell,
$$
and the analogs of $Q^{x\al}$ and $Q^{xy\al}$ are given by
\begin{align*}
  &Q^{kk}:=\E X^{k\ell}X^{k\ell T}=d^{-1}(\delta_{ik}\delta_{jk}I_d)_{i_j=1}^r,\,\, \Tr Q^{kk}=1,\,\,\text{and}
  \\
  &Q^{k\ell}:=\E X^{k\ell}X^{\ell kT}=d^{-1}(\delta_{ik}\delta_{j\ell}I_d)_{i_j=1}^r.
\end{align*}
We suppose that $(\vv^{k\ell})_{k<\ell}$ have uniformly bounded norms, so let $C_0>0$ be such that $\|\vv^{kl}\|^2_2\le C_0$ for every $k<\ell$.

Checking the conditions of Theorem \ref{t:Ag}, note first that (iii) follows from the definition of $X^{k\ell}$ and conditions for $\vv^{k\ell}$. As to (v-vi), these conditions are not fulfilled with $\sum_\al=\sum_{k<\ell}$, but since in (\ref{A}) we have $\sum_{k\neq\ell}$, by the definitions of $Q^{k\ell}$ we get the following analogs of (v-vi):
\begin{align*}
&\text{(v')}\quad
  \frac{1}{r}\sum_{k} \Tr Q^{kk}K= \frac{1}{rd} \Tr K\quad\text{and}
  \\
  &\text{(vi')}\quad
  \|\widetilde{Q}^{xy}\|_{HS}=O\big((r\sqrt{d})^{-1}\big),\quad\text{where}\quad
\widetilde{Q}^{xy}_{i\ga,j\be}:=\frac{1}{r^2}\sum_{k, \ell} | Q^{k\ell}_{i\ga,j\be}|=\frac{1}{dr^2}\delta_{\ga \be}.
   \end{align*}
Thus again the only condition which is not fulfilled is the first part of (iv), because now $\|Q^{kk}\|_{op}$, $\|Q^{k\ell}\|_{op}=O(d^{-1})$ (instead of $O((rd)^{-1})$). On the other hand, matrices $(Q^{kk})_k$ are ``orthogonal'' up to normalisation:
\begin{equation}\label{ort}
Q^{kk}Q^{p\ell}=\frac{1}{d}\delta_{kp}Q^{k\ell},\quad\text{and also}\quad
 Q^{kk}X^{p\ell}=\frac{1}{d}\delta_{kp}X^{k\ell},
\end{equation}
thus in the corresponding places of the proof we have single sums instead of double sums. This allows to repeat the proof of Theorem \ref{t:Ag} with slight modifications and to get first
\begin{align}
  zf_r+1=\frac{1}{rd}\sum_{k\neq\ell}\E\xi_{k\ell}(G X^{k\ell},X^{\ell k })\label{fn4}
    =c-\frac{c}{r^2}\sum_{k\neq\ell}\E\frac{1+(G^{k\ell}X^{k\ell},X^{\ell k})}{A_{k\ell}},
      \end{align}
where
$$
A_{k\ell}=(1+(G^{k\ell}X^{k\ell},X^{\ell k}))^2-(G^{k\ell}X^{k\ell},X^{ k\ell})(G^{k\ell}X^{\ell k},X^{\ell k}).
$$
Since for any matrix $B$, $|(B\vv^{k\ell},\vv^{k\ell })|\le C_0 \|B\|_{op}$, we have
\begin{equation}\label{VarA}
  \V A_{k\ell}\le C(\eta_0)\max_{k,\ell}\{\V(G^{k\ell}X^{k\ell},X^{\ell k}),\V(G^{k\ell}X^{k\ell},X^{k\ell})\},
\end{equation}
where we use notation $C(\eta_0)$ for every positive function uniformly bounded in $\eta_0\to\infty$,
$$
C(\eta_0)=O(1),\quad \eta_0\to\infty.
$$
It follows from Assumption 1 and Lemma \ref{l:var3} below that
\begin{align}\label{GklGlk}
  &\V(G^{k\ell}X^{k\ell},X^{\ell k})=\E\V_{k\ell}(G^{k\ell}X^{k\ell},X^{\ell k})+\V\Tr Q^{k\ell}G^{k\ell}=o(1),\quad\text{and}\notag
  \\
  &\V(G^{k\ell}X^{k\ell},X^{k\ell})=\E\V_{k\ell}(G^{k\ell}X^{k\ell},X^{k\ell})+\V\Tr Q^{kk}G^{k\ell}=o(1),
\end{align}
(cf (\ref{VarA})), hence, $\V A_{k\ell}=o(1)$, $r\to\infty$.
Also similar to (\ref{xy}) one can show that the terms containing $\E(G^{k\ell}X^{k\ell},X^{\ell k})=\Tr Q^{k\ell}G^{k\ell}$ do not contribute to the limit. It follows from above that
\begin{align}
  zf_r+1&=c-\frac{c}{r^2}\sum_{k,\ell}\frac{1}{1-f_r^{kk}f_r^{ \ell\ell}}+o(1)\notag
  \\
  &=c-\frac{c}{1-f_r^2}+R'_r+o(1),\label{fn5}
\end{align}
where
$$
f_r^{kk}=\E\Tr Q^{kk}G=d^{-1}\sum_{\ga}\E G_{k\ga,k\ga},\quad \frac{1}{r}\sum_{k}f_r^{kk}=f_r,
$$
and
\begin{align*}
  &R'_r=-\frac{c}{1-f_r^2}\frac{1}{r^2}\sum_{k,\ell}\frac{f_r^{kk}f_r^{ \ell\ell}-f_r^2}{1-f_r^{kk}f_r^{ \ell\ell}},\quad
  |R'_r|\le  C(\eta_0)\Delta_r,\quad \Delta_r=\max_{\ell}|f_r-f_r^{ \ell\ell}|.
\end{align*}
Using (\ref{ort}) -- (\ref{fn4}), similar to (\ref{zfnxa}) one can get for every $q\le r$
\begin{align}
  zf^{qq}_r+1&=\sum_{k\neq\ell}\E\xi_{k\ell}(Q^{qq}G X^{k\ell},X^{\ell k })
  =\frac{1}{d}\sum_{k\neq q}\E\xi_{kq}(G X^{kq},X^{q k })\label{fn5}
  \\
  &=c-\frac{c}{r}\sum_{k\neq q}\frac{1}{1-f_r^{kk}f_r^{ qq}}+o(1)\notag
  =c-\frac{c}{1-f_r^{qq}f_r}+R^q_r+o(1),
\end{align}
where $|R^q_r|\le C(\eta_0) \Delta_r$. Hence,
\begin{align*}
  z(f_r-f^{qq}_r)=\frac{cf_r(f^{qq}_r-f_r)}{(1-f_r^{qq}f_r)(1-f_r^{2})}+R'_r-R^q_r+o(1),
\end{align*}
and choosing $\eta_0$ big enough we get similar to (\ref{Del}) $\Delta'_r=o(1)$, which leads to (\ref{fn2}).  To  finish the proof of Theorem \ref{t:2}, it remains to prove the following statement, which is an analog of Lemma \ref{l:var2} (see also Lemma \ref{l:var}).

\begin{lemma}\label{l:var3}
  Let $V_r:=\max_{1\le k,\ell\le r}\V \Tr Q^{k\ell}G$. Then $V_r=o(1)$ as $r\to\infty$.
\end{lemma}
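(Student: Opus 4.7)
The plan is to follow the outline of Lemma~\ref{l:var2}, adapting its proof to the fact that now the update $\A-\A^{ij}=\xi_{ij}(X^{ij}X^{ji\,T}+X^{ji}X^{ij\,T})$ is rank \emph{two} rather than rank one. First I would apply the standard martingale bound
\[
V_r\le 4\sum_{i<j}\E|\Tr \Qk(G-G^{ij})|^2,
\]
with $G^{ij}:=(\A^{ij}-zI)^{-1}$, and use the rank-two resolvent identity to obtain two cross terms. The substitution $i\leftrightarrow j$ interchanges them (since $G^{ij}=G^{ji}$ and $X^{ij}\leftrightarrow X^{ji}$), so after squaring they yield a single sum $\sum_{i\neq j}\E\xi_{ij}|(\Qk G^{ij}X^{ij},\overline G X^{ji})|^2$. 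Then $(\Qk u,v)=d^{-1}\sum_\gamma u_{\ell\gamma}\bar v_{k\gamma}$, Cauchy--Schwarz in $\gamma$, and the crude operator-norm bound $\|(G^{ij}X^{ij})_\ell\|^2\le\|G^{ij}X^{ij}\|^2\le C_0/\eta_0^2$ reduce the problem to showing
\[
\E\sum_{i\neq j}\xi_{ij}\|(GX^{ji})_k\|^2=o(d^2).
\]

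The crucial identity---playing the role of $\sum_{i<j}\xi_{ij}Y^{ij}Y^{ij\,T}=\LL$ in the Laplacian proof---is the observation that $X^{ji}$ is supported in the $j$-th block, so summing $\xi_{ij}X^{ji}X^{ji\,T}$ over $i\ne j$ reconstructs exactly the ``degree'' diagonal blocks of the Laplacian:
\[
\sum_{i\neq j}\xi_{ij}X^{ji}X^{ji\,T}=\LL+\A.
\]
Consequently the preceding sum equals $\Tr_k\bigl(G(\LL+\A)G^*\bigr)$, where $\Tr_k$ is the trace over the $k$-th block. The $\A$-piece is handled exactly as in Lemma~\ref{l:var2}: since $G$ is the resolvent of $\A$, the identity $G\A=I_{rd}+zG$ gives $\Tr_k(G\A G^*)=\Tr_k G^*+z\Tr_k(GG^*)=O(d)$.

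The hardest part will be the $\LL$-piece. I would bound it by
\[
\Tr_k(G\LL G^*)\le\|P_k G\|_{HS}^2\,\|\LL\|_{op}\le\tfrac{d}{\eta_0^2}\,\|\LL\|_{op},
\]
which reduces the lemma to the estimate $\E\|\LL\|_{op}=o(d)$; combined with the $\A$-piece this yields $V_r\le \frac{C_0}{d^2\eta_0^2}\bigl(O(d)+\frac{d}{\eta_0^2}\E\|\LL\|_{op}\bigr)=o(1)$. The naive estimate $\|\LL\|_{op}=O(\max_j\deg(j))=O(d)$ is not enough, so one must exploit the structure: $\LL=D_\LL-\A$ with each diagonal block $(D_\LL)_{jj}=\sum_{i\ne j}\xi_{ij}\vv^{ij}\vv^{ij\,T}$ a sample covariance of $\deg(j)\sim cd$ independent normalized isotropic rank-ones $\vv^{ij}\vv^{ij\,T}$ satisfying Assumption~1. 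A Marchenko--Pastur/matrix-Bernstein estimate then forces $\E\|(D_\LL)_{jj}\|_{op}=O(1)$ uniformly in $j$, and a union bound over $j\le r$, together with an analogous bound on $\E\|\A\|_{op}$, delivers $\E\|\LL\|_{op}=o(d)$ in the regime (\ref{reg}), closing the argument.
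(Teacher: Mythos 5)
Your approach is genuinely different from the paper's, and the paper in fact explicitly flags the obstacle you run into: at the start of the proof of Lemma~\ref{l:var3} the authors write that ``the simple trick based on the resolvent identity, which we have used in the last line of the proof of Lemma~\ref{l:var2} to get r[i]d of the double sum over $i,j$, does not work here. So we will go another way.'' You reproduce the first half of the Lemma~\ref{l:var2} argument correctly (martingale bound, rank-two resolvent identity, the $i\leftrightarrow j$ symmetrisation, Cauchy--Schwarz in the block index, and the identity $\sum_{i\neq j}\xi_{ij}X^{ji}X^{ji\,T}=\LL+\A$), but then you are left with the block trace $\Tr_k(\overline G\,\LL\,G)$, and \emph{only} the $\A$-piece can be cleaned up via the resolvent identity because $G$ is the resolvent of $\A$, not of $\LL$. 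This is precisely the step the paper declares unavailable.

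The gap is the estimate $\E\|\LL\|_{op}=o(d)$, which you state but do not prove, and which is actually not true throughout the regime~(\ref{reg}). Note that (\ref{reg}) places no upper bound on how fast $r$ may grow relative to $d$. Writing $\deg(j)=\sum_{i\neq j}\xi_{ij}$, one has $\Tr(D_\LL)_{jj}\ge\deg(j)\cdot\inf\|\vv^{ij}\|_2^2$, hence $\|(D_\LL)_{jj}\|_{op}\ge c'\deg(j)/d$, and since $\LL\ge0$ and $D_\LL=\LL+\A$ with $\A$ trace-free on each diagonal block, $\|\LL\|_{op}\gtrsim\max_j\deg(j)/d$. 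The maximum of $r$ i.i.d.\ Binomial$(r-1,p_r)$ degrees grows with $\log r$; once $\log r\gg d^2\log d$ one has $\max_j\deg(j)\gg d^2$ and hence $\E\|\LL\|_{op}\gg d$. Even when $\E\|\LL\|_{op}=O(d)$ (which is what a matrix-Bernstein argument plausibly yields under mild growth restrictions), your bound gives only $V_r\le O(\eta_0^{-4})$, a small constant, not $o(1)$, so the lemma is not established. Finally, the sketched concentration step itself is nontrivial: you must control $\|\A\|_{op}$ as well as the block-diagonal part, and the union bound over $r$ blocks is exactly what degrades when $r$ is super-exponential in $d$.

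For comparison, the paper avoids any estimate on $\|\LL\|_{op}$. It runs a bootstrap/self-consistency argument: starting from the representation~(\ref{fn5}) of $zf_r^{qq}+1$, it writes $z\,\V\Tr Q^{qq}G$ as a sum over $k\neq q$, replaces $\overline G$ by $\overline G^{kq}$ with an $O(d^{-1})$ error, applies the rank-two formula~(\ref{ranktwo}) once, and organizes the result into two terms $T_r^{(1)},T_r^{(2)}$. Each is bounded, via Cauchy--Schwarz and the a priori bound $\V A_{kq}\le V_r+o(1)$, by $C(\eta_0)V_r+o(1)$ with $C(\eta_0)$ bounded as $\eta_0\to\infty$. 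Taking the maximum over $q$ (and treating the off-diagonal $\Tr Q^{qk}G$ analogously) yields $\eta_0 V_r\le C(\eta_0)V_r+o(1)$, and choosing $\eta_0$ large closes the loop with no graph-theoretic input whatsoever.
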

\begin{proof} Note that the simple trick based on the resolvent identity, which we have used in the last line of the proof of Lemma \ref{l:var2} to get reed of the double sum over $i,j$, does not work here. So we will go another way.

  For every $q\le r$, it follows from (\ref{fn5}) that
  \begin{align}
    z\V \Tr Q^{qq}G=&z\E \Tr Q^{qq}G(\Tr Q^{qq}\overline{G})^\circ\notag
    \\
    &=\frac{1}{d}\sum_{k\neq q}\E\xi_{kq}(G X^{kq},X^{q k })(\Tr Q^{qq}\overline{G}^{kq})^\circ+R_r,\label{varqq}
  \end{align}
  where $x^\circ=x-\E x$ and
  $$
  R_r=\frac{1}{d}\sum_{k\neq q}\E\xi_{kq}(G X^{kq},X^{q k })(\Tr Q^{qq}(\overline{G}-\overline{G}^{kq}))^\circ.
  $$
  We have
  $$
  |\Tr Q^{qq}(\overline{G}-\overline{G}^{kq})|=\xi_{kq}|(Q^{qq}G^{kq}X^{kq},\overline{G}X^{qk})+(Q^{qq}G^{kq}X^{qk},\overline{G}X^{kq})|
  \le 2\|Q^{qq}\|_{op}\|\vv^{qk}\|_2^2/\eta_0^2=O(d^{-1}),
  $$
  hence, $R_r=O(d^{-1})$. Also, we have
  $$
  \V \Tr Q^{qq}G=\V \Tr Q^{qq}G^{kq}+O(d^{-1}).
  $$
  Applying (\ref{ranktwo}), one can continue (\ref{varqq}) and get similar to (\ref{fn4})
  \begin{align*}
   z\V \Tr Q^{qq}G &=-\frac{c}{r}\sum_{k\neq q}\E\frac{(1+(G^{kq}X^{kq},X^{q k}))(\Tr Q^{qq}\overline{G}^{kq})^\circ}{A_{kq}}
   \\
   &=-\frac{c}{r}\sum_{k\neq q}\frac{\E(G^{kq}X^{kq},X^{q k})(\Tr Q^{qq}\overline{G}^{kq})^\circ}{\E A_{kq}}
   \\
   &\quad+\frac{c}{r}\sum_{k\neq q}\frac{1}{\E A_{kq}}\E\frac{(1+(G^{kq}X^{kq},X^{q k}))(\Tr Q^{qq}\overline{G}^{kq})^\circ A_{kq}^\circ}{ A_{kq}}
   =:T^{(1)}_r+T^{(2)}_r.
  \end{align*}
  Since by the Schwartz inequality
  \begin{align*}
     |\E(G^{kq}X^{kq},X^{q k})(\Tr Q^{qq}\overline{G}^{kq})^\circ|&=|\E\Tr Q^{kq}{G}^{kq}(\Tr Q^{qq}\overline{G}^{kq})^\circ|
     \\
 & \le (\V \Tr Q^{kq}{G}^{kq})^{1/2}(\V \Tr Q^{qq}{G}^{kq})^{1/2}\le V_r+O(d^{-1}),
  \end{align*}
  we have $|T^{(1)}_r|\le C(\eta_0) V_r+O(d^{-1})$. It follows from (\ref{VarA}) -- (\ref{GklGlk}) that
  $\V A_{kq}\le V_r+o(1)$. This and the Schwartz inequality allows to get
  $$
 | T^{(2)}_r|\le C(\eta_0) V_r^{1/2}(\max_{k,q}\V A_{kq})^{1/2}\le C(\eta_0) V_r+o(1).
  $$
  Summarising we get from for every $q\le r$
  $$
 \eta_0 \V \Tr Q^{qq}G \le C(\eta_0) V_r+o(1).
  $$
  Similar, one can show that $ \eta_0 \V \Tr Q^{qk}G \le C(\eta_0) V_r+o(1)$ for every $k,q$. Hence, taking maximum over $k,q$, we get
  $ \eta_0 V _r \le C(\eta_0) V_r+o(1)$, where $C(\eta_0)$ remains bounded as $\eta_0\to \infty$. Thus choosing $\eta_0$ big enough we get
  $V_r=o(1)$ as $r\to\infty$. This finishes the proof of the lemma and the proof of Theorem~\ref{t:A}.
\end{proof}

\address

\end{document}